\newcounter{num}[section]
\newenvironment{theorem}
{\refstepcounter{num}%
\bigskip\noindent\nopagebreak[4]{\bf Theorem~\arabic{section}.\arabic{num}. }\it}
\newenvironment{lemma}
{\refstepcounter{num}%
\bigskip\noindent\nopagebreak[4]{\bf Lemma~\arabic{section}.\arabic{num}. }\it}
\newenvironment{remark}
{\refstepcounter{num}%
\bigskip\noindent\nopagebreak[4]{\bf Remark~\arabic{section}.\arabic{num}. }}
\newcommand{\N}{{\mathbb{N}}}
\newcommand{\LL}{{\mathcal{L}}}
\newcommand{\V}{{\mathrm{V}}}
\newcommand{\om}{{\omega}}
\newcommand{\pr}{{\prime}}
\newcommand{\s}{{\sigma}}
\newcommand{\al}{{\alpha}}
\newcommand{\be}{{\beta}}
\newcommand{\Tcal}{{\mathcal{T}}}
\renewcommand{\a}{{\mathbf{a}}}
\newcommand{\pbf}{{\mathbf{p}}}
\newcommand{\qbf}{{\mathbf{q}}}
\newcommand{\rbf}{{\mathbf{r}}}
\newcommand{\Pbf}{{\mathbf{P}}}
\newcommand{\Qbf}{{\mathbf{Q}}}
\newcommand{\Rbf}{{\mathbf{R}}}
\newcommand{\one}{{\mathbf{1}}}
\newcommand{\zero}{{\mathbf{0}}}
\newcommand{\qq}{{\mathbf{q}_\om}}
\newcommand{\At}{{\mathcal{T}}}
\begin{document}

\title{Wreath products of semigroups in universal algebraic geometry and Plotkin`s problem}
\author{A.N. Shevlyakov}

\maketitle

\abstract{We prove that the wreath product $C=A\wr B$ of a semigroup $A$ with zero and an infinite cyclic semigroup $B$ is $\qq$-compact (logically Noetherian). Our result partially solves the Plotkin`s problem about wreath products in~\cite{plotkin}.}

\section{Introduction}

Our research is devoted to the studies in universal algebraic geometry. This direction was initiated in papers by B.~Plotkin~\cite{plotkin_lectures} and E.~Daniyarova, A.~Miasnikov, V.~Remeslennikov~\cite{DMR2}, and it deals with equations over various algebraic structures (semigroups, groups, Lie algebras, etc.). In the current paper we study semigroup equations, but the problem we solved comes from group theory.

In our paper we deal with wreath products. Wreath products plays a central role in the studies of group varieties and extensions of groups. The definition of a wreath product may be naturally given for semigroups, where wreath products have important applications in Krohn-Rhodes theory. 

Algebraic geometry (studying of equations) over wreath products was developed in~\cite{BMRom}. Namely, in~\cite{BMRom} it was proved that the wreath product $C=A\wr B$ of a non-abelian group $A$ and infinite $B$ is not equationally Noetherian, i.e. there exists an infinite system of equations $S$ over $C$ such that $S$ is not equivalent to any finite subsystem. In~\cite{shevl_shah} we obtained the same result using another approach. 

In~\cite{plotkin} B.~Plotkin gave the following list of problems for wreath products.
\begin{enumerate}
\item When a wreath product $C=A\wr B$ is equationally Noetherian?
\item When $C$ is $\qq$-compact but not equationally Noetherian?
\item When $C$ is not $\qq$-compact?

\end{enumerate}

First, we note that in the B.~Plotkin`s papers this notion is called {\it logically Noetherian property} (also B.Plotkin used the term ``geometrically Noetherian'' instead of ``equationally Noetherian''). However, in our paper we will use the terminology of~\cite{DMR2}. 

Let us explain the notion of the $\qq$-compactness. The idea of $\qq$-compactness plays a similar role in universal algebraic geometry as Compactness theorem in model theory. If any solution $P\in A^n$ of a system of equations $S(X)$ also satisfies an equation $E(X)$, then $E(X)$ is said to be an consequence of $S(X)$ over an algebraic structure (group, semigroup, etc.) $A$. If $S(X)$ is infinite there arises a question: is there a finite subsystem of $S(X)$ with the consequence $E(X)$? If an algebraic structure $A$ admits a positive answer for the last question, then $A$ is called $\qq$-compact (the formal and complete definition of $\qq$-compactness is given in our paper below). According to the definition, any equationally Noetherian algebraic structure is $\qq$-compact (this inclusion explains the statement of the second Plotkin`s question above). 

There are few examples of $\qq$-compact algebraic structures that are not equationally Noetherian. In~\cite{plotkin_monster} B.~Plotkin proved that the direct product of all finitely generated groups is  $\qq$-compact, but not equationally Noetherian. In~\cite{shevl_seams} there were studied semigroup equations with constants and defined $\qq$-compact semilattices. 

In our paper we study equations over wreath products of semigroups. All studied equations are coefficient-free, i.e. they have not occurrences of constants, just variables. For such class of equations we prove  two theorems. The following theorem is devoted to the first Plotkin`s question.

\medskip

\noindent{\bf Theorem A.} {\it Suppose a semigroup $A$ contains zero and  a semigroup $B$ is infinite cyclic. The wreath product $C=A\wr B$ is equationally Noetherian iff $A$ is nilpotent.
}

\medskip

Our second theorem deals with the second Plotkin`s question.

\medskip

\noindent{\bf Theorem B.}
The wreath product $C=A\wr B$ of two semigroups $A,B$ is $\qq$-compact if $A$ contains a zero and $B$ is infinite cyclic. 

\medskip

According to our theorems, there exist many examples of wreath products which are not equationally Noetherian but $\qq$-compact. Moreover, wreath products have an interesting property: a semigroup $A$ may be not $\qq$-compact, but nevertheless the whole wreath product $A\wr B$ is $\qq$-compact.

\section{Preliminaries}

Let $\LL=\{\cdot\}$ be a semigroup language. Commutative semigroup are often considered as algebraic structures of the additive language $\LL_+=\{+\}$. The set of all terms of the language $\LL$ ($\LL_+$) is denoted by $\Tcal$ ($\Tcal_+$). For shortness, such sets are called $\Tcal$-terms and $\Tcal_+$-terms respectively. Clearly, any $\Tcal$-term in variables $X=\{x_1,\ldots,x_n\}$ can be written as a product 
\begin{equation*}
x_{i_1}x_{i_2}\ldots x_{i_k} \; (1\leq i_j\leq n).
\end{equation*} 
Similarly, any $\Tcal_+$-term is equivalent to an expression of the form
\begin{equation*}
\al_1 x_1+\al_2 x_2+\ldots+\al_n x_n \; (\al_i\in\N=\{0,1,2,\ldots\})
\end{equation*} 
over any commutative semigroup.

A {\it semigroup equation} (equation, for shortness) is an equality
\[
t(X)=s(X), \; t,s\in\Tcal.
\]
In other words, any semigroup equation is an atomic formula of the language $\LL$.
Similarly, an {\it additive equation} is an atomic formula $t(X)=s(X)$, $t,s\in\Tcal_+$ 
of the language $\LL_+$.

Below we sometimes denote equations by capital letters, e.g. $E(X)$. The denotation $E(X)\colon t(X)=s(X)$ means that an equation $E(X)$ consists of two parts $t(X)$ and $s(X)$. 

Let us give the main definitions of algebraic geometry over semigroups (see~\cite{DMR2} for more details). All definitions below are given for the language $\LL$, but one can easily adapt them for the additive language $\LL_+$.

A system of semigroup equations (system, for shortness) $S(X)$ is an arbitrary set of semigroup equations. {\it Notice that we always consider  systems with a finite set of variables $X=\{x_1,\ldots,x_n\}$}. The set of all solutions of $S(X)$ in a semigroup $A$ is denoted by $\V_A(S)\subseteq A^n$. Two systems $S_1,S_2$ are called {\it equivalent over a semigroup $A$} if $\V_A(S_1)=\V_A(S_2)$.

A semigroup $A$ is {\it equationally Noetherian} if any infinite system $S(X)$ is equivalent to a finite subsystem $S^\ast(X)\subseteq S(X)$ over $A$. A  semigroup $A$ is {\it $\qq$-compact} if for any infinite system $S(X)$ and an equation $E(X)$ such that 
\begin{equation}
\V_A(S)\subseteq \V_A(E)
\label{eq:inclusion1}
\end{equation}
there exists a finite subsystem $S^\ast(X) \subseteq S(X)$ with
\begin{equation}
\V_A(S^\ast)\subseteq \V_A(E).
\label{eq:inclusion2}
\end{equation}

According to the definitions, any equationally Noetherian semigroup is $\qq$-compact.


\bigskip

Let $A,B$ be semigroups. The \textit{direct power} $A^B=\prod_{b\in B}A$ of $A$ with the index set $B$ is the set of all tuples 
\[
(a_b\mid b\in B),\; a_b\in A
\]
indexed by elements of $B$.
The semigroup $A^B$ admits the coordinate-wise multiplication
\[
(a_b\mid b\in B)\cdot(a^\pr_b\mid b\in B)=(a_ba^\pr_b\mid b\in B).
\]
For any direct power $A^B$ one can define the $b$-th projection ($b\in B$) by 
\[
\pi_b((a_b\mid b\in B))=a_b.
\]

Let us give the central notion of this paper, the notion of the wreath product $A \wr B$ of two semigroups $A,B$. Since in our studies the second semigroup $B$ is always commutative, we treat $B$ below as a semigroup of the additive language $\LL_+$. 

The {\it wreath product } $C=A\wr B$ of two semigroups $A,B$ is a set of all pairs
\[
\{(\a,b)\mid \a\in A^B,b\in B\}.
\]
The multiplication in $A \wr B$ is defined as follows. Let $\a=(a_b\mid b\in B)\in A^B$, $\a^\pr=(a_b\mid b\in B)\in A^B$, then 
\begin{equation}
(\a,b_1)(\a^\pr,b_2)=(\a^{\pr\pr}, b_1+b_2),
\label{eq:mult_in_wreath}
\end{equation}
where $\a^{\pr\pr}=(a^{\pr\pr}_b\mid b\in B)$, $a^{\pr\pr}_b=a_ba^\pr_{b+b_1}$.

Below in our paper $B$ is the infinite cyclic semigroup. Obviously, $B$ is isomorphic to the additive semigroup of positive integers 
\[
B=\{1,2,3,\ldots\}.
\]
Also we will consider the additive monoid 
\[
B_0=\{0,1,2,\ldots\}.
\]
Denote
\begin{eqnarray*}
\Pi=A^B,&\; &\Pi_0=A^{B_0},\\
\Pi_1=A_1^B,&\; &\Pi_{01}=A_1^{B_0},\\
C=A\wr B,&\;& C_0=A\wr B_0,\\
&& C_{01}=A_1\wr B_0
\end{eqnarray*} 
where $A_1=A\cup \{1\}$ is the semigroup $A$ with the unit $1$ adjointed.

The difference between the semigroups $B$ and $B_0$ provides the following difference between $\Pi$ and $\Pi_0$: the first index of  $\a\in \Pi$ is $1$, but the first index of $\a^\pr\in\Pi_0$ is $0$. 

Let $\a=(a_1,a_2,\ldots)\in\Pi_1$, $\a^\pr=(a^\pr_0,a^\pr_1,\ldots)\in\Pi_{01}$. We write $\a\approx \a^\pr$ if $\a$ and $\a^\pr$ define the same infinite vector, i.e.
\[
a_i=a^\pr_{i-1}.
\] 

The multiplication in wreath products for cyclic semigroups $B,B_0$ has the following sense. Let $b\in B_0$ defines a shift map $\s_b\colon\Pi_{01}\to \Pi_{01}$,
\[
\s_b((a_0,a_1,\ldots))=(a_{b},a_{b+1},\ldots).
\]
Then the product~(\ref{eq:mult_in_wreath}) may be written as 
\begin{equation}
(\a_1,b_1)(\a_2,b_2)=(\a_1\cdot \s_{b_1}(\a_2), b_1+b_2),
\label{eq:mult_in_wreath_with_shift}
\end{equation}
where $\cdot$ is the coordinate-wise product of two vectors from $\Pi_{01}$. Below we will use the denotation~(\ref{eq:mult_in_wreath_with_shift}) for  multiplication in wreath products.

\section{Equations over $B$ and $B_0$}

Since the additive cyclic semigroup (monoid) $B$ ($B_0$) is an algebraic structure of the language $\LL_+$, any equation over $B,B_0$ is an equality
\[
\al_1 x_1+\al_2 x_2+\ldots+\al_n x_n =\be_1 x_1+\be_2 x_2+\ldots+\be_n x_n\; (\al_i,\be_i\in\N). 
\]
Such equations over $B_0$ were studied in the series of papers~\cite{shevl1,shevl2,shevl3}.

Let $S$ be a system of additive equations over $B$ and $Y=\V_B(S)$. The set $Y$ provides the following equivalence relation over $\Tcal_+$:
\[
t\sim s\Leftrightarrow t(P)=s(P)\mbox{ at each $P\in Y$.}
\]

Let $[t]$ denote the equivalence class of a term $t\in\Tcal_+$. The set of all equivalence classes is called the {\it coordinate semigroup }of the set $Y$ and it is denoted by $\Gamma_{B_0}(Y)$.

\begin{theorem}\textup{(\cite{shevl1})}
\label{th:N_discrimination}
The additive monoid $\Gamma_{B_0}(Y)$ is discriminated by $B_0$, i.e. for any finite subset $\{\gamma_1,\ldots,\gamma_n\}\subseteq M$ there exists a homomorphism of monoids $\psi\colon M\to B_0$ with $\psi(\gamma_i)\neq \psi(\gamma_j)$ for $i\neq j$. 
\end{theorem}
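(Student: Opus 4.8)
The plan is to realise the homomorphisms we need as \emph{evaluations} at points of $Y$. For $P\in Y$ define $\mathrm{ev}_P\colon\Gamma_{B_0}(Y)\to B_0$ by $\mathrm{ev}_P([t])=t(P)$. This is well defined by the very definition of the relation $\sim$; it lands in $B_0=\{0,1,2,\ldots\}$ because $t$ is an $\N$-combination of the (positive) coordinates of $P$; and it is a monoid homomorphism since $(t+s)(P)=t(P)+s(P)$. Hence, given distinct classes $\gamma_1=[t_1],\ldots,\gamma_n=[t_n]$, it suffices to produce a single $P\in Y$ with $t_i(P)\neq t_j(P)$ for all $i\neq j$, and then take $\psi=\mathrm{ev}_P$. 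If $Y=\emptyset$ then all terms are equivalent, $M=\Gamma_{B_0}(Y)$ is trivial, and there is nothing to separate, so I assume $Y\neq\emptyset$.

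The structural fact I would exploit is that the defining equations $\al_1x_1+\ldots+\al_nx_n=\be_1x_1+\ldots+\be_nx_n$ are \emph{homogeneous}: rewriting them as $\sum_i(\al_i-\be_i)x_i=0$ shows that $Y$ is exactly the set of those integer points all of whose coordinates are positive lying in the rational linear subspace $L=\{x\in\mathbb{R}^n:\sum_i(\al_i-\be_i)x_i=0\text{ for every equation of }S\}$. In particular $Y$ is closed under multiplication by positive integers, and, more importantly, $Y$ is Zariski dense in $L$. To establish density I would fix one point $P_0\in Y$ (all of whose coordinates are $\geq 1$) together with the lattice $\Lambda=L\cap\mathbb{Z}^n$, and note that for every $v\in\Lambda$ the point $kP_0+v$ lies in $Y$ once $k$ is large, being an integer point of $L$ with eventually positive coordinates. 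A polynomial vanishing on $Y$ then vanishes at $kP_0+v$ for all lattice $v$ inside balls whose radius tends to infinity with $k$; since its degree is fixed, this forces it to vanish identically on $L$.

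With density in hand the separation is immediate. Two classes $\gamma_i,\gamma_j$ are distinct precisely when $t_i-t_j$ does not vanish identically on $Y$; as $Y\subseteq L$ this gives $(t_i-t_j)|_L\neq 0$, so each $H_{ij}=\{x\in L:t_i(x)=t_j(x)\}$ is a \emph{proper} subspace of $L$. The product $f=\prod_{i<j}(t_i-t_j)$ is therefore a nonzero polynomial on $L$ (a product of nonzero linear forms over an infinite domain), and by Zariski density of $Y$ it cannot vanish on all of $Y$. Choosing $P\in Y$ with $f(P)\neq 0$ yields $t_i(P)\neq t_j(P)$ for every pair $i\neq j$, and $\psi=\mathrm{ev}_P$ is the desired discriminating homomorphism.

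The hard part will be the Zariski density of $Y$ in $L$: one must upgrade ``agrees at every positive integer point of $L$'' to ``agrees on all of $L$'', and the honest route is the lattice-plus-scaling argument above, together with the elementary observation that a polynomial of bounded degree vanishing on larger and larger grids inside $\Lambda$ must be identically zero. Everything else — that evaluations are homomorphisms into $B_0$, and that a finite union of proper subspaces cannot contain a Zariski-dense set — is routine.
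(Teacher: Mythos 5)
Your argument is correct, but it takes a genuinely different route from the paper --- in fact the paper offers no proof of this statement at all: Theorem~\ref{th:N_discrimination} is imported from~\cite{shevl1}, and the accompanying remark only sketches the intended derivation, namely that $B_0$ is a \emph{co-domain} (no proper finite union of algebraic sets over $B_0$ is algebraic), whence the general theory of~\cite{DMR2} gives that every coordinate semigroup over $B_0$ is discriminated by $B_0$. Your proof is instead direct and self-contained. You identify monoid homomorphisms $\Gamma_{B_0}(Y)\to B_0$ with evaluations at points of $Y$ (exactly the identification the paper uses to pass from Theorem~\ref{th:N_discrimination} to its working form, Theorem~\ref{th:N_discrimination_new}), which reduces discrimination to producing a single point $P\in Y$ at which the differences $t_i-t_j$ of representatives of distinct classes are simultaneously nonzero; you then obtain such a point from the description $Y=L\cap\mathbb{Z}_{>0}^n$ for a rational subspace $L$ together with Zariski density of $Y$ in $L$ when $Y\neq\emptyset$. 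The density step, which you rightly call the crux, is only sketched but is sound and easy to complete: for fixed $v\in\Lambda=L\cap\mathbb{Z}^n$ the map $k\mapsto f(kP_0+v)$ is a polynomial in $k$ vanishing for all large $k$, hence at $k=0$, so $f$ vanishes on the full-rank lattice $\Lambda$ and therefore on $L$ (this replaces, or justifies, your growing-grids argument). As for what each approach buys: the paper's route is essentially free once one accepts the co-domain result of~\cite{shevl1} and the framework of~\cite{DMR2}, and it situates the theorem in its general universal-algebraic-geometry context; your route needs no external machinery, proves directly the point-separation form (Theorem~\ref{th:N_discrimination_new}, applied later as Lemma~\ref{l:point_for_discrimination}) that the paper actually uses, and implicitly re-proves the relevant special case of the co-domain property (a finite union of proper subspaces $H_{ij}$ cannot contain the dense set $Y$). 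One hypothesis you use silently but correctly: density of $Y$ in $L$ genuinely requires the strict positivity of coordinates of points of $Y$ --- over $B_0$, for instance with $L$ given by $x_1+x_2=0$, the set of nonnegative integer solutions is the single point $\bar 0$ and is not dense in $L$ --- so your argument proves the statement for $Y=\V_B(S)$, which is precisely the paper's setting.
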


\begin{remark}
Actually, we proved that the monoid $B_0$ is a co-domain, i.e. any proper finite union of algebraic sets over $B_0$ is not algebraic. Following~\cite{DMR2}, any coordinate semigroup over a co-domain is discriminated by this co-domain. 
\end{remark}

\medskip

%

Let $S$ be a system  in variables $X=\{x_1,\ldots,x_n\}$ of additive equations over $B_0$ with the solution set $Y$. Any homomorphism $\psi\colon \Gamma_{B_0}(Y)\to B_0$ actually induces the substitution $[x_i]\mapsto b_i$. Hence, a homomorphism $\psi$ defines a point $Q=(b_1,\ldots,b_n)\subseteq B_0^n$ and one can directly prove that $Q\in \V_B(S)$. Thus, we may reformulate Theorem~\ref{th:N_discrimination} as follows.

\begin{theorem}
\label{th:N_discrimination_new}
Let $S$ be a system of additive equations over $B_0$. For a finite set of $\Tcal_+$-terms $t_1,\ldots,t_k$ there exists a point $Q\in\V_B(S)$ such that 
\[
t_i(Q)\neq t_j(Q) \mbox{ if $t_i\nsim t_j$}.
\]
\end{theorem}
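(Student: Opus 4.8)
The plan is to realise Theorem~\ref{th:N_discrimination_new} as the pointwise face of Theorem~\ref{th:N_discrimination}, through the correspondence between monoid homomorphisms $\Gamma_{B_0}(Y)\to B_0$ and points of the solution set. Put $M=\Gamma_{B_0}(Y)$ with $Y=\V_B(S)$, and let $\nu\colon\Tcal_+\to M$, $\nu(t)=[t]$, be the canonical map; it is a homomorphism of additive monoids because $\sim$ is a congruence on $\Tcal_+$. First I would move from terms to classes: by definition $t_i\sim t_j\Leftrightarrow[t_i]=[t_j]$, so the hypothesis $t_i\nsim t_j$ means precisely that $[t_i]\neq[t_j]$ in $M$. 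Feeding the finite set $\{[t_1],\ldots,[t_k]\}\subseteq M$ to Theorem~\ref{th:N_discrimination} produces a homomorphism $\psi\colon M\to B_0$ that separates its distinct elements, so $\psi([t_i])\neq\psi([t_j])$ whenever $t_i\nsim t_j$.

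Next I would convert $\psi$ into the required point. With $b_i=\psi([x_i])$ and $Q=(b_1,\ldots,b_n)$, the composite $\psi\circ\nu\colon\Tcal_+\to B_0$ is an additive homomorphism carrying $x_i$ to $b_i$, so every term $t=\al_1x_1+\ldots+\al_nx_n$ satisfies $\psi([t])=\al_1b_1+\ldots+\al_nb_n=t(Q)$. Two checks then settle the heart of the statement. For separation, $t_i\nsim t_j$ gives $t_i(Q)=\psi([t_i])\neq\psi([t_j])=t_j(Q)$. For membership, each equation $t=s$ of $S$ holds at every $P\in Y$, hence $t\sim s$, so $[t]=[s]$ and $t(Q)=\psi([t])=\psi([s])=s(Q)$; thus $Q$ satisfies all of $S$.

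The hard part will be ensuring $Q\in B^n$ and not merely $Q\in B_0^n$, i.e.\ that each $b_i\ge 1$, since $\V_B(S)$ demands solutions in the semigroup $B=\{1,2,\ldots\}$ rather than in the monoid $B_0$. My remedy is to enlarge the set handed to Theorem~\ref{th:N_discrimination}, discriminating $\{[t_1],\ldots,[t_k]\}\cup\{[x_1],\ldots,[x_n]\}\cup\{[0]\}$, where $0$ is the trivial term and $[0]$ the identity of $M$. Assuming $Y\neq\emptyset$ (the only case in which the sought $Q\in\V_B(S)$ can exist), each $P\in Y\subseteq B^n$ has positive coordinates, so $x_i(P)=p_i\ge 1\neq 0=0(P)$ forces $x_i\nsim 0$, that is $[x_i]\neq[0]$. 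As $\psi$ is a monoid homomorphism we have $\psi([0])=0$, and the enlarged discrimination then yields $b_i=\psi([x_i])\neq 0$, hence $b_i\ge 1$ and $Q\in B^n$. This passage from $B_0$ to $B$ is the sole point where the semigroup/monoid distinction is felt; with positivity in hand the two checks of the previous paragraph deliver $Q\in\V_B(S)$ together with the required separation.
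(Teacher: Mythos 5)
Your proof is correct and follows essentially the same route as the paper: the paper likewise obtains this theorem as a reformulation of Theorem~3.1, turning a discriminating monoid homomorphism $\psi\colon\Gamma_{B_0}(Y)\to B_0$ into the point $Q=(\psi([x_1]),\ldots,\psi([x_n]))$ and noting that $Q$ solves $S$. Your additional step of discriminating the classes $[x_i]$ from $[0]$ to force strictly positive coordinates (so that $Q$ lands in $B^n$ rather than $B_0^n$) carefully settles a $B$-versus-$B_0$ point that the paper passes over with the phrase ``one can directly prove that $Q\in\V_B(S)$''.
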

 
The following simple result was also proved in~\cite{shevl1}.

\begin{theorem}
The infinite cyclic semigroup $B$ (monoid $B_0$) is equationally Noetherian.
\label{th:N_is_noeth}
\end{theorem}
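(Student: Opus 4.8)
The plan is to recognize that additive equations over $B_0$ (and over $B$) are linear, so that the entire problem reduces to elementary linear algebra over $\mathbb{Q}$. To each additive equation $E\colon \al_1 x_1+\ldots+\al_n x_n=\be_1 x_1+\ldots+\be_n x_n$ I associate the integer coefficient vector $v_E=(\al_1-\be_1,\ldots,\al_n-\be_n)\in\mathbb{Z}^n$. A point $P=(p_1,\ldots,p_n)$ then satisfies $E$ precisely when $\lb v_E,P\rb=0$, where $\lb\cdot,\cdot\rb$ denotes the standard dot product. Thus for a system $S$ the solution set is $\V_B(S)=\{P\in B_0^n : \lb v_E,P\rb=0 \text{ for every } E\in S\}$, that is, the intersection of $B_0^n$ with the orthogonal complement of the family $\{v_E : E\in S\}$.

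The key observation is that being orthogonal to every $v_E$ is the same as being orthogonal to their $\mathbb{Q}$-linear span. Put $W=\mathrm{span}_{\mathbb{Q}}\{v_E : E\in S\}\subseteq\mathbb{Q}^n$. For any $P\in\mathbb{Q}^n$ one has $\lb v_E,P\rb=0$ for all $E\in S$ if and only if $P\perp W$: the forward direction is immediate since each $v_E\in W$, and the converse holds because every element of $W$ is a finite $\mathbb{Q}$-combination of the $v_E$ and the dot product is linear. In particular this equivalence holds for every $P\in B_0^n$, so $\V_B(S)$ depends on the system $S$ only through the subspace $W$.

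Finally I exploit the finite dimension of the ambient space. Since $\dim_{\mathbb{Q}}\mathbb{Q}^n=n$, the family $\{v_E : E\in S\}$ contains a finite subfamily $\{v_{E_1},\ldots,v_{E_r}\}$ (with $r\le n$) spanning the same subspace $W$. Taking $S^\ast=\{E_1,\ldots,E_r\}$, a finite subsystem of $S$, the previous paragraph gives $\V_B(S^\ast)=\{P\in B_0^n : P\perp W\}=\V_B(S)$. Hence $S$ is equivalent over $B_0$ to its finite subsystem $S^\ast$, which is precisely the assertion that $B_0$ is equationally Noetherian.

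The argument is essentially routine, so I do not expect a genuine obstacle; the only points that require a little care are that the coefficient differences $\al_i-\be_i$ must be read in $\mathbb{Z}$ although the original coefficients lie in $\N$, and that the span $W$ must be formed over the field $\mathbb{Q}$ (not merely over $\N$ or $\mathbb{Z}$), so that both the finite-dimensionality and the orthogonality characterization are available. The semigroup $B$ is handled by the identical proof: the only change is that the domain becomes $B^n=\{1,2,\ldots\}^n$ instead of $B_0^n$, and this restriction of the domain affects neither the orthogonality description of $\V_B(S)$ nor the selection of a finite subfamily spanning $W$.
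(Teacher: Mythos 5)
Your proof is correct, but it cannot be ``the same as the paper's'' for a simple reason: the paper does not prove Theorem~3.3 at all --- it is quoted as a known result from~\cite{shevl1}, so your argument serves as a self-contained replacement rather than a parallel proof. The replacement is sound. The three pillars all hold: (i) since equality in $\N$ is equality in $\mathbb{Z}$, a point $P$ satisfies an additive equation $E$ iff $\lb v_E,P\rb=0$ for the integer vector $v_E$ of coefficient differences; (ii) for any $P\in\mathbb{Q}^n$, orthogonality to every $v_E$ is equivalent to orthogonality to the $\mathbb{Q}$-span $W$ of all the $v_E$, by bilinearity; (iii) because $\dim_{\mathbb{Q}}W\leq n$, a finite subfamily of $\{v_E\mid E\in S\}$ already spans $W$ (take a finite subset whose span has maximal dimension; maximality forces every other $v_E$ into that span). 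Since the equivalence $\V_B(S^\ast)=\V_B(S)$ is established pointwise, restricting the domain to $B^n=\{1,2,\ldots\}^n$ or $B_0^n$ costs nothing, exactly as you note. This is the standard linear-algebra route to equational Noetherianity for structures embeddable in a finite-dimensional $\mathbb{Q}$-vector space, and it buys something the bare citation does not: the reader sees that the bound on the size of the finite subsystem is $n$, the number of variables. One caveat to keep in mind: your argument, as written, covers only coefficient-free equations (which suffices here, since the paper restricts to these throughout); equations with constants would produce inhomogeneous systems and would need the affine version of the same rank argument, including a separate treatment of inconsistency.
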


\section{Terms and equations in wreath products}

The technique of the studies of equations over wreath products was found in~\cite{shevl_aut}, where we considered functional equations over direct powers of groups with occurrences of group automorphisms.

Since elements of the wreath product $C=A\wr B$ are pairs $(\a,b)$, $\a\in\Pi$, $b\in B$, any $\LL$-term $t(X)=x_{i_1}x_{i_2}\ldots x_{i_k}$ be an $\LL$-term is actually a pair of two terms $(t_A(X),t_B(X))$.  into  According to the multiplication in the wreath product $C=A\wr B$, the the term $t_A(X)$ is
\begin{equation}
t_A(X)=x_{i_1}\s_{x_{i_1}}(x_{i_2})\ldots \s_{x_{i_1}+\ldots+ x_{i_{k-1}}}(x_{i_k})
\label{eq:t_A(X)}
\end{equation}
and
\begin{equation}
t_B(X)=x_{i_1}+x_{i_2}+\ldots+x_{i_k}
\label{eq:t_B(X)}
\end{equation}
over $B$. 

Similarly, any equation $E(X)\colon t(X)=s(X)$ is equivalent over $C$ to the pair of equations $E_A(X),E_B(X)$:
\begin{eqnarray*}
t_A(X)=s_A(X),\\
t_B(X)=s_B(X),
\end{eqnarray*}
where the terms $t_A(X),t_B(X)$ are defined by~(\ref{eq:t_A(X)},\ref{eq:t_B(X)}) and 
\begin{eqnarray}
s_A(X)=x_{j_1}\s_{x_{j_1}}(x_{j_2})\ldots \s_{x_{j_1}+\ldots+ x_{j_{l-1}}}(x_{j_l}),
\label{eq:s_A(X)}\\
s_B(X)=x_{j_1}+x_{j_2}+\ldots+x_{j_l}
\label{eq:s_B(X)}
\end{eqnarray}
($x_{j_i}\in X$).

Below terms~(\ref{eq:t_A(X)},\ref{eq:s_A(X)}) and equation $t_A(X)=s_A(X)$ are called {\it wreath terms} and {\it wreath equation} respectively.

Denote points by
\begin{equation}
\label{eq:point_Pbf_and_P}
\Pbf=(\pbf_1,\ldots,\pbf_n),\; \pbf_i=(p_{i,1},p_{i,2},\ldots)\in\Pi,\; P=(p_1,\ldots,p_n)\in B^n.
\end{equation}

Thus, any system $S(X)=\{t^{(i)}(X)=s^{(i)}(X)\mid i\in I\}$ over $C$ is decomposed into two systems $S_A(X)$, $S_B(X)$ such that 
\begin{enumerate}
\item the system ${S}_A(X)=\{t^{(i)}_A(X)=s^{(i)}_A(X)\mid i\in I\}$ is a system of wreath equations over $\Pi$; 
\item the system ${S}_B(X)=\{t^{(i)}_B(X)=s^{(i)}_B(X)\mid i\in I\}$ is a system of additive equations over $B$.
\item if $\Pbf\in \V_{\Pi}(S_A)$ and $P\in\V_B(S_B)$ then $(\Pbf,P)\in\V_C(S)$.
\end{enumerate}

Let $t_A(X),s_A(X)$ be the wreath terms~(\ref{eq:t_A(X)},\ref{eq:s_A(X)}) and $P$ be the point from~(\ref{eq:point_Pbf_and_P}). We denote
\begin{eqnarray}
t_A(X,P)=x_{i_1}\s_{p_{i_1}}(x_{i_2})\ldots \s_{p_{i_1}+\ldots+ p_{i_{k-1}}}(x_{i_k})
\label{eq:t_A(X,P)},\\
s_A(X,P)=x_{j_1}\s_{p_{j_1}}(x_{j_2})\ldots \s_{p_{j_1}+\ldots+ p_{j_{l-1}}}(x_{j_l}).
\label{eq:s_A(X,P)}
\end{eqnarray}
The wreath equation $t_A(X)=s_A(X)$ becomes 
\begin{equation}
\label{eq:t_A(X,P)=s_A(X,P)}
t_A(X,P)=s_A(X,P).
\end{equation}
In~(\ref{eq:t_A(X,P)},\ref{eq:s_A(X,P)},\ref{eq:t_A(X,P)=s_A(X,P)}) all shifts $\s$ have constant values. Below such terms and equations are called {\it shift terms} and  {\it shift equations} respectively.

Shift terms are maps $t_A(X,P),s_A(X,P)\colon \Pi^n_{01}\to\Pi_{01}$. Hence, one can define the projections $\pi_b$ ($b\in B$) as follows:
\begin{eqnarray}
\label{eq:pi_b(t_A(X,P))}
\pi_b(t_A(X,P))=x_{i_1,b}x_{i_2,b+p_{i_1}}\ldots x_{i_k,b+p_{i_1}+\ldots+ p_{i_{k-1}}},\\
\label{eq:pi_b(s_A(X,P))}
\pi_b(s_A(X,P))=x_{j_1,b}x_{j_2,b+p_{j_1}}\ldots x_{j_l,b+p_{j_1}+\ldots+ p_{j_{l-1}}},
\end{eqnarray}
where each $x_i\in X$ is the vector $(x_{i,1},x_{i,2},\ldots)$. Obviously,  a point $\Pbf\in\Pi^n_{01}$ satisfies a shift equation $t_A(X,P)=s_A(X,P)$ iff $\Pbf$ satisfies any projection $\pi_b(t_A(X,P))=\pi_b(s_A(X,P))$ ($b\in B$). Below we use the following denotations: if $\Pbf$ satisfies the $b$-th projection of an equation $E_A(X,P)$ we write $\pi_b(E_A(\Pbf,P))$; otherwise, we write $\neg \pi_b(E_A(\Pbf,P))$. 

\bigskip

{\it Suppose below that a system $S(X)$ of semigroup equations and an equation $E(X)$ satisfy the inclusion~(\ref{eq:inclusion1}) over the semigroup $C$}.

Since the semigroup $B$ is equationally Noetherian (Theorem~\ref{th:N_is_noeth}), there exists a finite subsystem
\begin{equation}
\label{eq:S_hat}
\hat{S}(X)\subseteq S(X)\mbox{ with }Y_B=\V_B(S_B)=\V_B(\hat{S}_B).
\end{equation}

The set $Y_B$ generates the following equivalence relation over $\At_+$:
\[
t\sim s\Leftrightarrow t(P)=s(P)\mbox{ at each $P\in Y_B$.}
\]

\begin{lemma}
For a system $S(X)$ and an equation $E(X)\colon t(X)=s(X)$ it holds $t_B\sim s_B$.
\label{l:t_B_sim_s_B}
\end{lemma}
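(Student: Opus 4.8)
The plan is to unwind the meaning of $t_B\sim s_B$ and reduce it directly to the standing inclusion $\V_C(S)\subseteq\V_C(E)$. Concretely, $t_B\sim s_B$ asserts that $t_B(P)=s_B(P)$ for every $P\in Y_B$; writing $t_B,s_B$ as in~(\ref{eq:t_B(X)},\ref{eq:s_B(X)}) this is the equality $p_{i_1}+\ldots+p_{i_k}=p_{j_1}+\ldots+p_{j_l}$ for each $P=(p_1,\ldots,p_n)\in\V_B(S_B)$. So I would fix an arbitrary $P\in Y_B$ and manufacture a point of $\V_C(S)$ whose $B$-component is exactly this $P$; feeding that point into the inclusion and reading off the $B$-part of the equation $E$ will give the desired equality.

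To build such a point I would exploit the hypothesis that $A$ contains a zero $0$. Let $\Pbf^{(0)}=(\pbf_1,\ldots,\pbf_n)\in\Pi^n$ be the tuple all of whose coordinates equal $0$, i.e. each $\pbf_i=(0,0,\ldots)$. I claim that $\Pbf^{(0)}$ satisfies every shift equation $t^{(i)}_A(X,P)=s^{(i)}_A(X,P)$ of $S_A(X,P)$. Indeed, by the projection formulas~(\ref{eq:pi_b(t_A(X,P))},\ref{eq:pi_b(s_A(X,P))}), each projection $\pi_b$ of either side, evaluated at $\Pbf^{(0)}$, is a nonempty product of coordinates of the $\pbf_i$'s, hence a product every factor of which equals $0$; since $0$ is absorbing this product is $0$. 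Thus both sides collapse to the constant zero vector and the shift equations hold for this $P$. Combined with $P\in\V_B(S_B)$, the decomposition recorded in item~3 above yields $(\Pbf^{(0)},P)\in\V_C(S)$.

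The inclusion~(\ref{eq:inclusion1}) then gives $(\Pbf^{(0)},P)\in\V_C(E)$. Since $E$ is equivalent over $C$ to the pair $E_A,E_B$, the point in particular satisfies the additive equation $E_B\colon t_B(X)=s_B(X)$, so $t_B(P)=s_B(P)$. As $P\in Y_B$ was arbitrary, this is precisely the assertion $t_B\sim s_B$.

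The only substantive content is the construction of the point, and the one place a hypothesis is genuinely used is the existence of the absorbing zero in $A$: it makes the all-zero tuple a solution of the \emph{entire} $A$-part of $S$ regardless of the $B$-component $P$, thereby decoupling the two components and allowing every solution of $S_B$ to be lifted to a solution of $S$. I do not anticipate a real obstacle; the only thing to handle with care is the index shift between $\Pi=A^B$ and $\Pi_{01}=A_1^{B_0}$ mediated by $\approx$, but since all coordinates here are $0$ this bookkeeping is immaterial to the argument.
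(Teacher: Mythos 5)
Your proof is correct and takes essentially the same route as the paper: the paper's one-line proof simply asserts that the inclusion~(\ref{eq:inclusion1}) forces $\V_B(S_B)\subseteq \V_B(t_B(X)=s_B(X))$ ``by the definition of a wreath product,'' and the lifting you construct is exactly the justification of that assertion. Your all-zero tuple $\Pbf^{(0)}$ makes explicit the detail the paper leaves implicit, namely that every $P\in Y_B$ occurs as the $B$-component of a point of $\V_C(S)$, which is precisely where the hypothesis that $A$ has a zero is used.
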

\begin{proof}
By the definition of a wreath product, we have $\V_B(S_B)\subseteq \V_B(t_B(X)=s_B(X))$ if~(\ref{eq:inclusion1}) holds. Hence, $t_B(P)=s_B(P)$ at any point $P\in\V_B(S_B(X))$, and we obtain $t_B\sim s_B$.
\end{proof}

Let us discuss the case, when the system $S_B(X)$ is inconsistent. It follows that $\hat{S}_B(X)$ is also inconsistent, and the inclusion~(\ref{eq:inclusion2}) obviously holds for the subsystem $\hat{S}$, since $\V_C(\hat{S})=\emptyset$. Thus, {\it we assume below that $S_B$ is consistent and the set $Y_B$ is not empty.}

Let $[t]$ denote the equivalence class of a term $t\in\At_+$ with respect to the equivalence relation $\sim$. The {\it length} $\|t\|\in \N$  of $t(X)=\al_1x_1+\ldots+\al_nx_n$ is $\al_1+\ldots+\al_n$.

\begin{lemma}
\label{l:[t]_is_finite}
The equivalence class $[t]$ is finite for any $t\in \At_+$. 
\end{lemma}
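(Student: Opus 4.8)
The plan is to exploit the fact that every coordinate of a point of $B^n$ is at least $1$, so that a single evaluation already bounds all the coefficients of any term lying in the class $[t]$.

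First I would put an arbitrary member of $[t]$ into normal form. Any element of $[t]$ is an $\At_+$-term, hence has the shape $s=\be_1x_1+\ldots+\be_nx_n$ with $\be_i\in\N$, and the relation $s\sim t$ means precisely that $s(P)=t(P)$ for every $P\in Y_B$. Next I would fix a single witness point: by the standing assumption $Y_B\neq\emptyset$, so choose any $P=(p_1,\ldots,p_n)\in Y_B$. Because $B=\{1,2,3,\ldots\}$, every coordinate satisfies $p_i\geq 1$; set $c=t(P)\in\N$, which is a fixed constant once $t$ and $P$ are fixed.

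The main step is then a coefficient bound. For any $s=\be_1x_1+\ldots+\be_nx_n\in[t]$ we have $\sum_{i=1}^n\be_ip_i=s(P)=t(P)=c$. Since all the summands are nonnegative and $p_i\geq 1$, for each index $i$ it holds $\be_i\leq \be_ip_i\leq c$. Hence each coefficient $\be_i$ ranges over the finite set $\{0,1,\ldots,c\}$, so $[t]$ contains at most $(c+1)^n$ distinct terms and is therefore finite.

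I do not expect a genuine obstacle; the single point to watch is that the witness must be chosen in $B^n$, whose coordinates are all $\geq 1$, rather than in $B_0^n$, since a zero coordinate $p_i=0$ would leave the corresponding coefficient $\be_i$ completely unconstrained and the bound would collapse. The nonemptiness of $Y_B$ needed to produce the witness point is exactly the consistency assumption on $S_B$ made immediately before the lemma.
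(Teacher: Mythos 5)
Your proof is correct and is essentially the paper's own argument: both fix a single point $P$ of the nonempty set $Y_B$ and exploit that every coordinate of a point of $B^n$ is at least $1$, so the fixed value $c=t(P)$ bounds the size of any term in $[t]$. The only difference is presentational: you bound each coefficient $\be_i\leq c$ directly and count at most $(c+1)^n$ terms, whereas the paper argues by contradiction, noting that an infinite class would contain a term $t_m$ with $\|t_m\|>c$ while $\|t_m\|\leq t_m(P)=c$.
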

\begin{proof}
Assume there exist an infinitely many $\At_+$-terms $t_1,t_2,\ldots$ with $t_i\sim t$. Hence, the infinite system of additive equations $S_\infty=\{t(X)=t_i(X)\mid i\in \N\}$ is consistent, since $\V_B(S_\infty)\supseteq Y_B$.

Let $P\in \V_B(S_\infty)$ (i.e. $t(P)=t_i(P)$ for all $i\in \N$) and denote $b=t(P)$. Since the set $\{t_i\}$ is infinite, there exists a term $t_m$ such that $\|t_m\|>b$. By the definition of the length, we obtain $t_m(P)>b$ that contradicts the equality $b=t(P)=t_m(P)$. 
\end{proof}


Let us define a relation over $\At_+$ by:
\[
s<t \Leftrightarrow \exists s^\pr\in\At_+\colon s+s^\pr \sim t,
\]
and denote  $\downarrow t=\{s\mid s<t\}\subseteq\Tcal_+$.  

\begin{lemma} The relation $<$ has the following properties:
\begin{enumerate}
\item $<$ is a strict partial order;
\item if $s<t$ and $t\sim t^\pr$ then $s<t^\pr$;
\item if $t\sim t^\pr$ then $\downarrow t=\downarrow t^\pr$
\item if $s<t$ and $s\sim s^\pr$ then $s^\pr<t$.
\end{enumerate}
\label{l:properties_<}
\end{lemma}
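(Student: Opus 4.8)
The plan is to reduce all four statements to a single structural fact: the equivalence $\sim$ is a congruence with respect to the addition of terms. First I would record that for additive terms one has $(t+s)(P)=t(P)+s(P)$ at every point $P$, so if $t\sim t'$ and $s\sim s'$ then $(t+s)(P)=t(P)+s(P)=t'(P)+s'(P)=(t'+s')(P)$ for all $P\in Y_B$, giving $t+s\sim t'+s'$. With this congruence property in hand, together with the transitivity of $\sim$, the whole lemma becomes bookkeeping.

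For property (1) the transitivity of $<$ goes as follows: from $s<t$ and $t<u$ pick witnesses $s',t'\in\At_+$ with $s+s'\sim t$ and $t+t'\sim u$; then $s+(s'+t')\sim t+t'\sim u$ by the congruence property and transitivity of $\sim$, and $s'+t'\in\At_+$, so $s<u$. The delicate half is irreflexivity. Here I would use that $B=\{1,2,\dots\}$ consists of strictly positive integers and that $Y_B$ is nonempty, which is part of the running hypotheses. If $s<s$ held, a witness $s'\in\At_+$ would satisfy $s+s'\sim s$, whence $s'(P)=0$ for any $P\in Y_B$; but a genuine $\At_+$-term has $\|s'\|\ge1$ (the empty term is not available over the semigroup $B$), so $s'(P)=\sum_i\alpha_i p_i\ge\|s'\|\ge1$ at every $P\in B^n$, a contradiction. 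Transitivity and irreflexivity together yield asymmetry, so $<$ is a strict partial order.

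Properties (2) and (4) then each follow from one application of the congruence property combined with transitivity of $\sim$: for (2), $s+s'\sim t\sim t'$ gives $s+s'\sim t'$, i.e. $s<t'$; for (4), from $s+s''\sim t$ and $s\sim s'$ we get $s'+s''\sim s+s''\sim t$, i.e. $s'<t$. Finally, property (3) is a formal consequence of (2): if $t\sim t'$, then $s\in\downarrow t$ means $s<t$, hence $s<t'$ by (2), so $s\in\downarrow t'$; applying the same argument to $t'\sim t$ gives the reverse inclusion, so $\downarrow t=\downarrow t'$.

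The main obstacle is the irreflexivity step, and the point to watch is that it genuinely relies on two facts fixed earlier: that the index semigroup $B$ contains only positive elements, so a nontrivial term cannot vanish, and that $Y_B\neq\emptyset$, so that $\sim$ does not collapse every pair of terms. Everything else rests on the congruence property of $\sim$ and the transitivity of $\sim$.
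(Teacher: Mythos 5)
Your proof is correct and follows essentially the same route as the paper: the same witness manipulations for transitivity and properties (2)--(4), with (3) deduced formally from (2), and irreflexivity resting on the fact that a nonempty $\At_+$-term takes strictly positive values on $Y_B\subseteq B^n$. You merely make explicit two steps the paper treats as obvious --- the congruence of $\sim$ under $+$ and the positivity argument for irreflexivity --- which is a faithful elaboration, not a different approach.
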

\begin{proof}
\begin{enumerate}
\item The irreflexivity is obvious: terms $t$ and $t+s$ cannot be equivalent over $B$ for any $Y_B\neq \emptyset$. 
Let us prove the transitivity. If $t_1<t_2$ and $t_2<t_3$, there exist $\At_+$-terms $s_1,s_2$ such that $t_1+s_1\sim t_2$, $t_2+s_2\sim t_3$. Hence, $t_1+s_1+s_2\sim t_3$ and $t_1<t_3$.

\item There exists an $\At_+$-term  $s^\pr$ such that $s+s^\pr\sim t\sim t^\pr$. Hence, $s< t^\pr$.

\item This statement immediately follows from the previous one.

\item Let us take $s<t$ and $s\sim s^\pr$. It follows there exists an $\At_+$-term  $s_0$ such that $s+s_0\sim t$, i.e. $s(P)+s_0(P)=t(P)$ for each $P\in Y_B$. By the condition, $s(P)=s^\pr(P)$, hence $s^\pr(P)+s_0(P)=t(P)$ for each $P\in Y_B$, and therefore $s^\pr+s_0\sim t$. Thus, $s^\pr<t$.

\end{enumerate}  
\end{proof}

\begin{lemma}
The set $\downarrow t$ is finite for any $t\in \At_+$. 
\label{l:[t]_<_is_finite}
\end{lemma}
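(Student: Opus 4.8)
The plan is to reduce the finiteness of $\downarrow t$ to a single length bound and then count. Concretely, I would show that there is one fixed integer $M$ such that $\|s\|\le M$ for every $s\in\downarrow t$, after which finiteness is immediate, since the number of $\Tcal_+$-terms $\al_1 x_1+\ldots+\al_n x_n$ with prescribed bounded length is finite (the coefficient vectors $(\al_1,\ldots,\al_n)$ then range over a finite subset of $\N^n$).

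To get the bound I would fix a single point $P^\ast=(p_1,\ldots,p_n)\in Y_B$; such a point exists because we have assumed $S_B$ is consistent, so $Y_B\neq\emptyset$. The key structural fact is that $B=\{1,2,3,\ldots\}$, so every coordinate satisfies $p_i\ge 1$. Now take any $s\in\downarrow t$, i.e. $s<t$. By definition there is a term $s^\pr\in\At_+$ with $s+s^\pr\sim t$, so evaluating both sides at $P^\ast$ gives $s(P^\ast)+s^\pr(P^\ast)=t(P^\ast)$, whence $s(P^\ast)\le t(P^\ast)$. Writing $s=\al_1 x_1+\ldots+\al_n x_n$ and using $p_i\ge 1$, I would then estimate
\[
\|s\|=\al_1+\ldots+\al_n\le \al_1 p_1+\ldots+\al_n p_n=s(P^\ast)\le t(P^\ast).
\]
Thus every $s\in\downarrow t$ has length at most $M:=t(P^\ast)$, a constant depending only on $t$ and the chosen point, and the counting step above finishes the proof.

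The argument is short, and there is no serious obstacle; the only point that needs care is the choice to measure everything at one \emph{positive} point of $Y_B$, which converts the order relation $<$ into a genuine bound on lengths. This is exactly where it matters that the index semigroup is $B=\{1,2,\ldots\}$ rather than $B_0$: the positivity of all coordinates of $P^\ast$ is precisely what makes the value $s(P^\ast)$ dominate the length $\|s\|$.
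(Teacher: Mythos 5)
Your proof is correct and is essentially the paper's argument: both evaluate at a point of $Y_B$, use the positivity of coordinates in $B=\{1,2,\ldots\}$ to get $\|s\|\le s(P^\ast)\le t(P^\ast)$, and conclude via the finiteness of the set of $\Tcal_+$-terms of bounded length. The only difference is presentational — you argue directly with an explicit bound $M=t(P^\ast)$, while the paper phrases the same estimate as a proof by contradiction.
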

\begin{proof}
Assume terms $t_1,t_2,\ldots$ satisfy $t_i<t$. Hence, there exist terms $s_1,s_2,\ldots$ such that $t_i+s_i\sim t$. It follows $t_i(P)+s_i(P)=t(P)=b$ for each $P\in Y_B$. Since the set $\{t_i\}$ is infinite, there exists a term $t_m$ with $\|t_m\|>b$. By the definition of the length, we obtain $t_m(P)>b$ that contradicts the equality $t_m(P)+s_m(P)=t(P)$. 
\end{proof}


Let us take a system $S$ and an equation $E(X)\colon t(X)=s(X)$ which satisfy the inclusion~(\ref{eq:inclusion1}). By Lemma~\ref{l:t_B_sim_s_B}, we have $t_B\sim s_B$ and let 
\begin{equation}
T_<=\downarrow t_B\cup\{0\}=\downarrow s_B\cup\{0\},
\label{eq:T_<}
\end{equation}
be the set of terms with zero term $0$ adjointed. 

By Lemma~\ref{l:[t]_<_is_finite}, the set $T_<$ is finite. Let us define a system $S^\ast(X)\subseteq S(X)$ as follows:
\begin{equation}
\label{eq:S^ast}
S^\ast(X)=\hat{S}(X)\cup\{t^\pr(X)=s^\pr(X)\in S \mid \|t^\pr_B(X)\|\leq |T_<|\mbox{ or }
\|s^\pr_B(X)\|\leq |T_<|\},
\end{equation}
where the subsystem $\hat{S}(X)\subseteq S(X)$ was defined by~(\ref{eq:S_hat}).

\begin{lemma}
The system $S^\ast$ is finite.
\end{lemma}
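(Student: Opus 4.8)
The plan is to show that each of the two pieces constituting $S^\ast$ in~(\ref{eq:S^ast}) is finite. The first piece $\hat{S}(X)$ is finite by its very construction in~(\ref{eq:S_hat}), so the whole task reduces to bounding the second piece, namely the collection of equations $t^\pr(X)=s^\pr(X)\in S$ with $\|t^\pr_B\|\le|T_<|$ or $\|s^\pr_B\|\le|T_<|$.

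First I would record the key translation between word-length and additive length. If $t^\pr(X)=x_{i_1}x_{i_2}\ldots x_{i_k}$, then by~(\ref{eq:t_B(X)}) we have $t^\pr_B=x_{i_1}+x_{i_2}+\ldots+x_{i_k}$, so $\|t^\pr_B\|$ equals $k$, the number of variable occurrences in the word $t^\pr$. Consequently the condition $\|t^\pr_B\|\le|T_<|$ is literally a bound on the length of the $\Tcal$-term $t^\pr$, and there are only finitely many words of length at most $|T_<|$ in the $n$ letters $x_1,\ldots,x_n$.

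The remaining point is that bounding one side of an equation of $S$ forces a bound on the other side. Here I would use that every equation $t^\pr(X)=s^\pr(X)$ of $S$ contributes the additive equation $t^\pr_B=s^\pr_B$ to $S_B$; since $Y_B=\V_B(S_B)$, this additive equation holds on all of $Y_B$, that is, $t^\pr_B\sim s^\pr_B$. Thus, in the case $\|t^\pr_B\|\le|T_<|$, the term $t^\pr_B$ ranges over a finite set of additive terms, and for each of them Lemma~\ref{l:[t]_is_finite} guarantees that the equivalence class containing $s^\pr_B$ is finite. Hence $s^\pr_B$ also ranges over a finite set, so $\|s^\pr_B\|$ is bounded and, by the translation above, $s^\pr$ is again a word of bounded length. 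The case $\|s^\pr_B\|\le|T_<|$ is symmetric.

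Putting these together, both $t^\pr$ and $s^\pr$ are words of bounded length in finitely many variables, so the second piece of $S^\ast$ consists of finitely many semigroup equations; adjoining the finite set $\hat{S}(X)$ keeps the union finite. The only mildly delicate point is the non-injectivity of the passage $t^\pr\mapsto t^\pr_B$: equal additive terms may arise from different words, which is precisely why I must bound the word-lengths of \emph{both} $t^\pr$ and $s^\pr$ (and not merely the additive terms $t^\pr_B,s^\pr_B$) before concluding finiteness of the set of semigroup equations.
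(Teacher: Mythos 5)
Your proof is correct and takes essentially the same route as the paper: both arguments rest on the identification of $\|t^\pr_B\|$ with the word length of $t^\pr$, the finiteness of the set of terms of bounded length, and Lemma~\ref{l:[t]_is_finite} applied to the side of the equation that is not bounded by the definition of $S^\ast$. The only difference is packaging: the paper argues by contradiction (pigeonholing infinitely many equations onto a single short side and deducing an infinite equivalence class), while you argue directly and, usefully, make explicit the finite-fiber observation about the passage $t^\pr\mapsto t^\pr_B$ that the paper's contradiction step uses implicitly.
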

\begin{proof}
Assume that $S^\ast$ is infinite. The set of all $\Tcal_+$-terms of the length at most $|T_<|$ is finite, hence, there exist a term $t^\pr(X)$, $\|t^\pr_B\|\leq |T_<|$ and an infinite subsystem $\tilde{S}=\{t^\pr(X)=s_i(X)\mid i\in I\}\subseteq S$. It follows that the equivalence class $[t^\pr_B]$ is infinite, and we obtain a contradiction with Lemma~\ref{l:[t]_is_finite}. 
\end{proof}

The following lemma explains the values of long terms.

\begin{lemma}
\label{l:long_terms}
Let $t(X)=x_{i_1}\ldots x_{i_k}$ be a term over a wreath product $C=A\wr B$, and a point $(\Pbf,P)$ is defined by~(\ref{eq:point_Pbf_and_P}). Suppose there exists a finite set of indexes  $B^\pr\subseteq B$ such that $p_{i,b}=0$ if $b\notin B^\pr$. Then $t_A(\Pbf,P)=(0,0,,\ldots)\in\Pi$ for the term $t_A(X,P)$ if $|B^\pr|<k$.
\end{lemma}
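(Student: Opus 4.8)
The plan is to work coordinate-wise, using the explicit description of the projections~(\ref{eq:pi_b(t_A(X,P))}) of the shift term. Fixing an arbitrary index $b\in B$ and substituting the point $\Pbf$ (that is, $x_{i_j}\mapsto \pbf_{i_j}$) into~(\ref{eq:pi_b(t_A(X,P))}), the $b$-th coordinate of $t_A(\Pbf,P)$ becomes a product of exactly $k$ elements of $A$,
\[
\pi_b(t_A(\Pbf,P))=p_{i_1,b}\,p_{i_2,b+p_{i_1}}\,p_{i_3,b+p_{i_1}+p_{i_2}}\cdots p_{i_k,b+p_{i_1}+\ldots+p_{i_{k-1}}},
\]
where each factor is a coordinate of one of the vectors $\pbf_{i_j}$, and the second subscripts are the running partial sums of the shifts $p_{i_1},\ldots,p_{i_{k-1}}$ coming from $P\in B^n$.

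First I would check that the $k$ second subscripts $b,\;b+p_{i_1},\;b+p_{i_1}+p_{i_2},\ldots$ are pairwise distinct. This is immediate because $P\in B^n$ with $B=\{1,2,3,\ldots\}$, so every $p_{i_j}\geq 1$ and the partial sums $0<p_{i_1}<p_{i_1}+p_{i_2}<\ldots$ are strictly increasing; adding the fixed $b$ preserves strict monotonicity. Hence the product above reads off $k$ coordinates of the vectors $\pbf_{i_1},\ldots,\pbf_{i_k}$ at $k$ distinct positions of $B$.

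Next I would apply the pigeonhole principle. By hypothesis $|B^\pr|<k$, while the $k$ positions listed above are distinct elements of $B$; therefore at least one of them, say the $j$-th position $b+p_{i_1}+\ldots+p_{i_{j-1}}$, does not belong to $B^\pr$. The assumption $p_{i,b}=0$ for $b\notin B^\pr$ then forces $p_{i_j,\,b+p_{i_1}+\ldots+p_{i_{j-1}}}=0$, i.e. one factor of the product equals the zero of $A$.

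Finally, since $A$ contains a zero (an absorbing element), any product in which one factor equals $0$ is itself $0$; thus $\pi_b(t_A(\Pbf,P))=0$. As $b\in B$ was arbitrary, every coordinate of $t_A(\Pbf,P)$ vanishes, giving $t_A(\Pbf,P)=(0,0,\ldots)\in\Pi$. The only step requiring any care is the distinctness of the $k$ positions, which is exactly where the hypothesis that $B$ is the \emph{positive} cyclic semigroup (forcing strictly positive shifts) is used; the pigeonhole count $|B^\pr|<k$ together with the absorbing property of the zero then finishes the argument immediately.
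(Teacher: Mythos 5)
Your proof is correct and follows essentially the same argument as the paper's: both rest on the observation that the $k$ positions $b,\,b+p_{i_1},\ldots,b+p_{i_1}+\ldots+p_{i_{k-1}}$ are pairwise distinct (since every $p_{i_j}\geq 1$ in $B=\{1,2,\ldots\}$), so they cannot all lie in $B^\pr$ when $|B^\pr|<k$, forcing a zero factor that annihilates the product. The only difference is presentational: you argue directly (contrapositively) that each projection vanishes, while the paper assumes a nonzero projection and derives the contradiction $|B^\pr|\geq k$.
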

\begin{proof}
Assume that a projection $\pi_b(t_A(X,P))$~(\ref{eq:pi_b(t_A(X,P))}) is not equal to $0\in A$. It follows that 
\[
p_{i_1,b}\neq 0,
\; p_{i_2,b+p_{i_1}}\neq 0, \; \ldots \; p_{i_k,b+p_{i_1}+\ldots+ p_{i_{k-1}}}\neq 0.
\]
Hence, 
\begin{equation}
b,\; b+p_{i_1},\; \ldots \; b+p_{i_1}+\ldots+ p_{i_{k-1}}\in B^\pr.
\end{equation}
These elements are pairwise distinct in $B$, and we obtain the contradiction $|B^\pr|\geq k$. 
\end{proof}

\section{Equationally Noetherian wreath products}

In this section we find a wide class of wreath products $C=A\wr B$ which are not equationally Noetherian.

Recall that a semigroup $A$ with a zero $0$ is {\it nilpotent} if there exists a number $s\in\N$ such that any product $a_1a_2\ldots a_s$ ($a_i\in A$) equals $0$.

\noindent{\bf Theorem A.} {\it Suppose a semigroups $A$ contains zero and a semigroup $B$ is infinite cyclic. The wreath product $C=A\wr B$ is equationally Noetherian iff $A$ is nilpotent.
}
\begin{proof}
Let us prove the ``only if`` part of the statement. 

Consider an infinite system over $C$:
\[
S=\begin{cases}
x_1x_3=x_4 x_6,\\
x_1x_2x_3=x_4x_5 x_6,\\
x_1x_2^2x_3=x_4x_5^2 x_6,\\
x_1x_2^3x_3=x_4x_5^3 x_6,\\
\ldots
\end{cases}
\]
Let
\[
S_n=\begin{cases}
x_1x_3=x_4 x_6,\\
x_1x_2x_3=x_4x_5 x_6,\\
x_1x_2^2x_3=x_4x_5^2 x_6,\\
\ldots\\
x_1x_2^{n-1}x_3=x_4 x_5^{n-1} x_6.
\end{cases}
\]
be the first $n$ equations of $S$. 

Since $A$ is not nilpotent, there exist elements $a_1,a_2,\ldots,a_{n+1}\in A$ with $a_1a_2\ldots a_{n+1}\neq 0$.
Define a point $\Pbf=(\pbf_1,\ldots,\pbf_6)\in\Pi^6$ as follows:
\begin{eqnarray*}
\pbf_1=(a_1,0,0,\ldots),\\
\pbf_2=(a_1,a_2,\ldots,a_{n}),\\
\pbf_3=(\underbrace{0,\ldots,0}_{\mbox{$n+1$ times}},a_{n+1},0,0,\ldots),\\
\pbf_4=\pbf_5=\pbf_6=\zero,
\end{eqnarray*}
where $\zero=(0,0,\ldots)\in\Pi$. Denote $P=(1,1,1,1,1,1)\in B^6$.  
Let us prove that the point $(\Pbf,P)$ satisfies the system $S_n$. We have the following systems $(S_n)_A,(S_n)_B$:
\[
(S_n)_A=\begin{cases}
x_1\s_{x_1}(x_3)=x_4 \s_{x_4}(x_6),\\
x_1\s_{x_1}(x_2)\s_{x_1+x_2}(x_3)=x_4\s_{x_4}(x_5) \s_{x_4+x_5}(x_6),\\
x_1\s_{x_1}(x_2)\s_{x_1+x_2}(x_2)\s_{x_1+2x_2}(x_3)=x_4\s_{x_4}(x_5) \s_{x_4+x_5}(x_5)\s_{x_4+2x_5}(x_6),\\
\ldots\\
x_1\s_{x_1}(x_2)\s_{x_1+x_2}(x_2)\ldots\s_{x_1+(n-2)x_2}(x_2)\s_{x_1+(n-1)x_2}(x_3)=\\
x_4\s_{x_4}(x_5)\s_{x_4+x_5}(x_5)\ldots\s_{x_4+(n-2)x_5}(x_5)\s_{x_4+(n-1)x_5}(x_6)
\end{cases}
\]
\[
(S_n)_B=\begin{cases}
x_1+x_3=x_4+x_6,\\
x_1+x_2+x_3=x_4+x_5+ x_6,\\
x_1+2x_2+x_3=x_4+2x_5 +x_6,\\
\ldots\\
x_1+(n-1)x_2+x_3=x_4 +(n-1)x_5+ x_6.
\end{cases}
\]
Let us put the points $\Pbf,P$ into the system $(S_n)_A$. Notice that the right parts of all equations become $\zero$ (since $\pbf_4=\zero$):
\[
(S_n)_A(\Pbf,P)=\begin{cases}
\pbf_1\s_{1}(\pbf_3)=\zero,\\
\pbf_1\s_{1}(\pbf_2)\s_{2}(\pbf_3)=\zero,\\
\pbf_1\s_{1}(\pbf_2)\s_{2}(\pbf_2)\s_{3}(\pbf_3)=\zero,\\
\ldots\\
\pbf_1\s_{1}(\pbf_2)\s_{2}(\pbf_2)\ldots\s_{n-1}(\pbf_2)\s_{n}(\pbf_3)=\zero
\end{cases}
\Leftrightarrow
\]
\[
\Leftrightarrow\begin{cases}
(a_1,0,0,\ldots)(\underbrace{0,\ldots,0}_{\mbox{$n$ times}},a_{n+1},0,0,\ldots)=\zero,\\
(a_1,0,0,\ldots)(a_2,a_3,\ldots,a_{n},0,0,\ldots)(\underbrace{0,\ldots,0}_{\mbox{$n-1$ times}},a^\pr,0,0,\ldots)=\zero,\\
(a_1,0,0,\ldots)(a_2,a_3,\ldots,a_{n},0,0,\ldots)(a_3,a_4,\ldots,a_{n-1},a_n,0,\ldots)
(\underbrace{0,\ldots,0}_{\mbox{$n-2$ times}},a_{n+1},0,0,\ldots)=\zero,\\
\ldots\\
(a_1,0,0,\ldots)(a_2,a_3,\ldots,a_{n},0,0,\ldots)\ldots(a_{n-1},a_n,0,\ldots)(0,a_{n+1},0,0,\ldots)=\zero
\end{cases}
\]
and we see that all equations of $(S_n)_A$ are satisfied by $(\Pbf,P)$. 

The system $(S_n)_B$ is obviously satisfied by the point $P$. Thus, $(\Pbf,P)\in\V_C(S_n)$.

Let us take the $(n+1)$-th equation of $S_A$:
\begin{multline*}
x_1\s_{x_1}(x_2)\s_{x_1+x_2}(x_2)\ldots\s_{x_1+(n-1)x_2}(x_2)\s_{x_1+nx_2}(x_3)=\\
x_4\s_{x_4}(x_5)\s_{x_4+x_5}(x_5)\ldots\s_{x_4+(n-1)x_5}(x_5)\s_{x_4+nx_5}(x_6).
\end{multline*}
For this equation the point $(\Pbf,P)$ gives
\begin{eqnarray*}
\pbf_1\s_{1}(\pbf_2)\s_{2}(\pbf_2)\ldots\s_{n}(\pbf_2)\s_{n+1}(\pbf_3)&=&
(a_1,0,0,\ldots)\\
&&(a_2,a_3,\ldots,a_{n},0,0,\ldots)\\
&&(a_3,a_4,\ldots,a_{n},0,0,\ldots)\\
&&\ldots\\
&&(a_n,0,0,\ldots)\\
&&(a_{n+1},0,0,\ldots)\\
&=&(a_1a_2a_3\ldots a_{n+1},0,0,\ldots)\neq \zero.
\end{eqnarray*}

Thus, $(\Pbf,P)\notin\V_C(S)$, and the system $S_n$ is not equivalent to $S$ over $C$. Since we arbitrarily chose $n$, there does not exist a finite subsystem with the solution set $\V_C(S)$.

\medskip

Let us prove the ``if'' part of the statement. 

Suppose the semigroup $A$ is nilpotent, i.e. the product $a_1\ldots a_s$ equals $0$ for any $a_i$. Hence,  for a semigroup term $t(X)=x_{i_1}\ldots x_{i_k}$ with $k>s$ the wreath term $t_A(X)$ equals $(0,0,0,\ldots)$ at any point $(\Pbf,P)$, $\Pbf\in\Pi^n$, $P\in B^n$. We have that for a finite set of variables $X$ there exist at most finite number of pairwise non-equivalent wreath terms. Thus, any system $S(X)$ has a finite subsystem $S^\ast(X)\subseteq S(X)$ such that $S^\ast_A(X)$ and $S_A(X)$ are equivalent over $\Pi$. 

Since the semigroup $B$ is equationally Noetherian (Theorem~\ref{th:N_is_noeth}), there exists a finite subsystem $\hat{S}(X)\subseteq S(X)$ with $\V_B(S_B^\ast)=\V_B(S_B)$. 

Finally, the finite subsystem $S^\ast(X)\cup\hat{S}(X)$ is equivalent to $S(X)$ over $C$.
\end{proof}

\section{Main result}

Let us prove Theorem B. Let $A$ be a semigroup with zero and $B$ is infinite cyclic.

Below in this section a system $S(X)$ and an equation $E(X)$ satisfy the inclusion~(\ref{eq:inclusion1}).
Assume that the inclusion~(\ref{eq:inclusion2}) does not hold for the subsystem $S^\ast(X)$~(\ref{eq:S^ast}), i.e.
\[
\V_C(S^\ast)\nsubseteq \V_C(E),
\]
and there exists a point $(\Pbf,P)$~(\ref{eq:point_Pbf_and_P})
with $(\Pbf,P)\in\V_C(S^\ast)\setminus\V_C(E)$. Since $S^\ast(X)$ contains the subsystem $\hat{S}(X)$, then $P\in\V_B(E_B)$, but $\Pbf\notin \V_\Pi(E_A(X,P))$ and there exists a projection with $\neg \pi_\be(E_A(\Pbf,P))$. The following lemma states that we may put $\be=1$.

\begin{lemma}
For all denotations above there exists a point $\Rbf=(\rbf_1,\ldots,\rbf_n)$, $\rbf_i=(r_{i,1},r_{i,2},\ldots)\in \Pi$ such that
\begin{enumerate}
\item $\Rbf\in\V_\Pi(S^\ast_A(X,P))$;
\item $\neg \pi_1(E_A(\Rbf,P))$.
\end{enumerate}
\label{l:to_first_projection}
\end{lemma}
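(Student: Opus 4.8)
The plan is to produce $\Rbf$ from $\Pbf$ by a single left shift that carries the failing projection index $\be$ down to $1$, keeping the tuple $P$ untouched. Concretely, I would define
\[
r_{i,b}=p_{i,b+\be-1}\qquad(1\le i\le n,\ b\in B),
\]
so that $\rbf_i=\s_{\be-1}(\pbf_i)\in\Pi$; this is well defined because each $\pbf_i$ has coordinates at all positive indices and $\be-1\ge 0$. (When $\be=1$ there is nothing to prove and one takes $\Rbf=\Pbf$.)

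The whole argument rests on one reindexing identity: for each wreath term $u\in\{t_A,s_A\}$ of every equation and every $b\in B$,
\[
\pi_b(u(\Rbf,P))=\pi_{b+\be-1}(u(\Pbf,P)).
\]
This is immediate from the projection formulas for $\pi_b(t_A(X,P))$ and $\pi_b(s_A(X,P))$: the offsets occurring there, namely $p_{i_1},\ p_{i_1}+p_{i_2},\ \ldots$, are built solely from $P$, which we do not change, so substituting $r_{i_j,c}=p_{i_j,c+\be-1}$ only advances the base index from $b$ to $b+\be-1$ and leaves the word in $A$ otherwise identical.

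Granting the identity, both requirements follow at once. For property~2, I apply it with $b=1$ to the two sides of $E_A$: the first projection of $E_A$ at $\Rbf$ equals the $\be$-th projection of $E_A$ at $\Pbf$, which fails by the standing assumption $\neg\pi_\be(E_A(\Pbf,P))$; hence $\neg\pi_1(E_A(\Rbf,P))$. For property~1, note that $(\Pbf,P)\in\V_C(S^\ast)$ forces $\Pbf\in\V_\Pi(S^\ast_A(X,P))$, i.e.\ $\Pbf$ satisfies every projection $\pi_c$, for all $c\in B$, of every equation of $S^\ast_A(X,P)$. Since $b+\be-1\ge 1$ for all $b\in B$, the identity shows that each projection $\pi_b$ at $\Rbf$ coincides with a projection at $\Pbf$ that is already satisfied, so $\Rbf$ satisfies all projections of $S^\ast_A(X,P)$ and thus lies in $\V_\Pi(S^\ast_A(X,P))$.

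I do not expect a genuine obstacle here: the lemma is a bookkeeping statement that left shifts commute with the projection operators up to the reindexing $b\mapsto b+\be-1$, precisely because all shift data is stored in the fixed point $P$ which is held constant. The only points needing care are the trivial case $\be=1$ and the check that the shifted indices $b+\be-1$ never leave $B$, both of which are immediate.
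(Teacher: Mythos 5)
Your proposal is correct and follows essentially the same route as the paper: you define $\Rbf$ by the identical shift $r_{i,b}=p_{i,b+\be-1}$ (i.e.\ $\rbf_i=(p_{i,\be},p_{i,\be+1},\ldots)$) and prove the same reindexing identity $\pi_b(u(\Rbf,P))=\pi_{b+\be-1}(u(\Pbf,P))$, from which both conclusions follow exactly as in the paper. The only additions are the explicit remarks on the trivial case $\be=1$ and on well-definedness, which are harmless.
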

\begin{proof}
Let 
\[
\rbf_i=(p_{i,\be},p_{i,\be+1},\ldots)
\]
be the shift of the elements $\pbf_i\in\Pi$. Let us prove $\Rbf\in\V_\Pi(S^\ast_A(X,P))$. 

Take an equation $t(X)=s(X)\in S^\ast$ such that
the $b$-th projections of $t_A(X,P),s_A(X,P)$ are defined by~(\ref{eq:pi_b(t_A(X,P))},\ref{eq:pi_b(s_A(X,P))}).

We have
\begin{eqnarray*}
\pi_b(t_A(\Rbf,P))&=&r_{i_1,b}r_{i_2,b+p_{i_1}}\ldots r_{i_k,b+p_{i_1}+\ldots+ p_{i_{k-1}}}\\
&=&p_{i_1,b+(\be-1)}p_{i_2,b+p_{i_1}+(\be-1)}\ldots p_{i_k,b+p_{i_1}+\ldots+ p_{i_{k-1}}+(\be-1)}\\
&=& \pi_{b+(\be-1)}(t_A(\Pbf,P)),\\
\pi_b(s_A(\Rbf,P))&=&r_{j_1,b}r_{j_2,b+p_{j_1}}\ldots r_{j_l,b+p_{j_1}+\ldots+ p_{j_{l-1}}}\\
&=&p_{j_1,b+(\be-1)}p_{j_2,b+p_{j_1}+(\be-1)}\ldots p_{j_l,b+p_{j_1}+\ldots+ p_{j_{l-1}}+(\be-1)}\\
&=& \pi_{b+(\be-1)}(s_A(\Pbf,P)).
\end{eqnarray*}
Since $\Pbf\in\V_\Pi( S_A^\ast(X,P))$, it holds $\pi_{b+(\be-1)}(t_A(\Pbf,P))=\pi_{b+(\be-1)}(s_A(\Pbf,P))$, and we finally obtain $\pi_b(t_A(\Rbf,P))=\pi_b(s_A(\Rbf,P))$. Thus, $\Rbf\in\V_\Pi(S^\ast_A(X,P))$.

Similarly, for the equation $E_A(X,P)\colon t_A(X,P)=s_A(X,P)$ we have
\[
\pi_1(t_A(\Rbf,P))=\pi_{1+(\be-1)}(t_A(\Pbf,P))\neq \pi_{1+(\be-1)}(s_A(\Pbf,P))=\pi_1(s_A(\Rbf,P)),
\]
and therefore $\neg \pi_1(E_A(\Rbf,P))$.
\end{proof}

{According to Lemma~\ref{l:to_first_projection}, we assume below that the point $\Pbf$ does not satisfy the first ($\beta=1$) projection of the shift equation $E_A(X,P)$, i.e.  }
\begin{equation*}
\neg \pi_1(E_A(\Pbf,P)).
\label{eq:Pbf_not_1_projection}
\end{equation*}

\bigskip

Let us introduce new variables $X^\pr=\{x_i^\pr\mid 1\leq i\leq n\}$, $X^{\pr\pr}=\{x_i^{\pr\pr}\mid 1\leq i\leq n\}$ and obtain a new system $S^\ast(X^\pr,X^{\pr\pr})$ by
\begin{equation}
x_i=x_i^\pr x_i^{\pr\pr}.
\label{eq:new_variables}
\end{equation}

\begin{remark}
Let us explain the sense of new variables. We know that the last lower indexes $x_{i_1}+\ldots+ x_{i_{k-1}}$, $x_{j_1}+\ldots+ x_{j_{l-1}}$ in terms $t_A(X),s_A(X)$~(\ref{eq:t_A(X)},\ref{eq:s_A(X)}) are not equal to the parts of the additive equation $t_B(X)=s_B(X)$. New variables allow us to achieve this property, and we essentially use it in Lemma~\ref{l:big_lemma}.
\end{remark}

\medskip

Clearly, the wreath terms $t_A(X),s_A(X)$~(\ref{eq:t_A(X)},\ref{eq:s_A(X)}) become
\begin{multline}
\label{eq:t_A(pr,prpr)}
t_A(X^\pr,X^{\pr\pr})=x^\pr_{i_1}\s_{x^\pr_{i_1}}(x^{\pr\pr}_{i_1})
\s_{x^\pr_{i_1}+x^{\pr\pr}_{i_1}}(x^\pr_{i_2})
\s_{x^\pr_{i_1}+x^{\pr\pr}_{i_1}+x^\pr_{i_2}}(x^{\pr\pr}_{i_2})\\
\ldots
\s_{x^\pr_{i_1}+\ldots+ x^{\pr\pr}_{i_{k-1}}}(x^\pr_{i_k})\s_{x^\pr_{i_1}+\ldots+ x^{\pr\pr}_{i_{k-1}}+x^\pr_{i_k}}(x^{\pr\pr}_{i_k}),
\end{multline}

\begin{multline}
s_A(X^\pr,X^{\pr\pr})=x^\pr_{j_1}\s_{x^\pr_{j_1}}(x^{\pr\pr}_{j_1})
\s_{x^\pr_{j_1}+x^{\pr\pr}_{j_1}}(x^\pr_{j_2})
\s_{x^\pr_{j_1}+x^{\pr\pr}_{j_1}+x^\pr_{j_2}}(x^{\pr\pr}_{j_2})\\
\ldots
\s_{x^\pr_{j_1}+\ldots+ x^{\pr\pr}_{j_{l-1}}}(x^\pr_{j_l})\s_{x^\pr_{j_1}+\ldots+ x^{\pr\pr}_{j_{l-1}}+x^\pr_{j_l}}(x^{\pr\pr}_{j_l}).
\label{eq:s_A(pr,prpr)}
\end{multline}
The shift terms for some points $P^\pr=(p_1^\pr,\ldots,p_n^\pr)\in B^n$, $P^{\pr\pr}=(p_1^{\pr\pr},\ldots,p_n^{\pr\pr})\in B^n$ are the following
\begin{multline}
\label{eq:t_A(pr,prpr,P)}
t_A(X^\pr,X^{\pr\pr},P^\pr,P^{\pr\pr})=x^\pr_{i_1}\s_{p^\pr_{i_1}}(x^{\pr\pr}_{i_1})
\s_{p^\pr_{i_1}+p^{\pr\pr}_{i_1}}(x^\pr_{i_2})
\s_{p^\pr_{i_1}+p^{\pr\pr}_{i_1}+p^\pr_{i_2}}(x^{\pr\pr}_{i_2})\\
\ldots 
\s_{p^\pr_{i_1}+\ldots+ p^{\pr\pr}_{i_{k-1}}}(x^\pr_{i_k})
\s_{p^\pr_{i_1}+\ldots+p^{\pr\pr}_{i_{k-1}}+p^\pr_{i_k}}(x^{\pr\pr}_{i_k}),
\end{multline}
\begin{multline}
s_A(X^\pr,X^{\pr\pr},P^\pr,P^{\pr\pr})=x^\pr_{j_1}\s_{p^\pr_{j_1}}(x^{\pr\pr}_{j_1})
\s_{p^\pr_{j_1}+p^{\pr\pr}_{j_1}}(x^\pr_{j_2})
\s_{p^\pr_{j_1}+p^{\pr\pr}_{j_1}+p^\pr_{j_2}}(x^{\pr\pr}_{j_2})\\
\ldots 
\s_{p^\pr_{j_1}+\ldots+ p^{\pr\pr}_{j_{l-1}}}(x^\pr_{j_l})
\s_{p^\pr_{j_1}+\ldots+ p^{\pr\pr}_{j_{l-1}}+p^\pr_{j_l}}(x^{\pr\pr}_{j_l}).
\label{eq:s_A(pr,prpr,P)}
\end{multline}

For the shift terms $t_A(X^\pr,X^{\pr\pr},P^\pr,P^{\pr\pr})$, $s_A(X^\pr,X^{\pr\pr},P^\pr,P^{\pr\pr})$ one can obviously get the $b$-projections:
\begin{multline}
\label{eq:pi_b(t_A(pr,prpr,P))}
\pi_b(t_A(X^\pr,X^{\pr\pr},P^\pr,P^{\pr\pr}))=
x^\pr_{i_1,b}
x^{\pr\pr}_{i_1,b+p^\pr_{i_1}}
x^\pr_{i_2,b+p^\pr_{i_1}+p^{\pr\pr}_{i_1}}
x^{\pr\pr}_{i_2,b+p^\pr_{i_1}+p^{\pr\pr}_{i_1}+p^\pr_{i_2}}\\
\ldots 
x^\pr_{i_k,b+p^\pr_{i_1}+\ldots+ p^{\pr\pr}_{i_{k-1}}}
x^{\pr\pr}_{i_k,b+p^\pr_{i_1}+\ldots+ p^{\pr\pr}_{i_{k-1}}+p^\pr_{i_k}},
\end{multline}
\begin{multline}
\pi_b(s_A(X^\pr,X^{\pr\pr},P^\pr,P^{\pr\pr}))=
x^\pr_{j_1,b}
x^{\pr\pr}_{j_1,b+p^\pr_{j_1}}
x^\pr_{j_2,b+{p^\pr_{j_1}+p^{\pr\pr}_{j_1}}}
x^{\pr\pr}_{j_2,b+p^\pr_{j_1}+p^{\pr\pr}_{j_1}+p^\pr_{j_2}}\\
\ldots 
x^\pr_{j_l,b+p^\pr_{j_1}+\ldots+ p^{\pr\pr}_{j_{l-1}}}
x^{\pr\pr}_{j_l,b+p^\pr_{j_1}+\ldots+ p^{\pr\pr}_{j_{l-1}}+p^\pr_{j_l}}.
\label{eq:pi_b(s_A(pr,prpr,P))}
\end{multline}

The system $S^\ast(X^\pr,X^{\pr\pr})$ in variables $(X^\pr,X^{\pr\pr})$ is considered below as a system over the semigroup $C_{01}$. 

Let us take arbitrary points  
\[
\Pbf^\pr=(\pbf^\pr_1,\ldots,\pbf^\pr_n),\; \pbf^\pr_i=(p^\pr_{i,0},p^\pr_{i,1},\ldots)\in\Pi_{01},
\] 
\[
\Pbf^{\pr\pr}=(\pbf^{\pr\pr}_1,\ldots,\pbf^{\pr\pr}_n),\; \pbf^{\pr\pr}_i=(p^{\pr\pr}_{i,0},p^{\pr\pr}_{i,1},\ldots)\in\Pi_{01}.
\] 
and assign
\begin{equation*}
\pbf^\pr_i:\approx \pbf_i,\; \pbf^{\pr\pr}_i:=\one=(1,1,1,\ldots)
\end{equation*}
In other words, 
\begin{equation*}
p^\pr_{i,b}=p_{i,(b-1)},\; p^{\pr\pr}_{i,b}=1
\end{equation*}
where the elements $p_{i,b}$ were defined by~(\ref{eq:point_Pbf_and_P}) and $1$ is the unit of the semigroup $A_1$. 

\begin{lemma}
\label{l:double_variables}
Suppose a point $(\Pbf,P)$~(\ref{eq:point_Pbf_and_P}) is a solution of a system $S^\ast$ over $C$. Let us consider the system $S^\ast(X^\pr,X^{\pr\pr})$ and equation $E(X^\pr,X^{\pr\pr})$ in variables~(\ref{eq:new_variables}) over the semigroup $C_{01}$. We have:
\begin{enumerate}
\item $(P,\bar{0})\in\V_{B_0}(S^\ast_B(X^\pr,X^{\pr\pr}))$, where 
\[
\bar{0}=(0,\ldots,0)\in B^n;
\]
\item $(\Pbf^\pr,\Pbf^{\pr\pr})\in\V_{\Pi_{01}}(S^\ast_A(X^\pr,X^{\pr\pr},P,\bar{0}))$;
\item $\neg \pi_0(E_A(\Pbf^\pr,\Pbf^{\pr\pr},P,\bar{0}))$.
\end{enumerate}
\end{lemma}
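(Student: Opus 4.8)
The plan is to verify the three statements by direct evaluation at the prescribed point, and the whole argument is organized around a single observation: splitting $x_i=x_i^\pr x_i^{\pr\pr}$ and setting $\Pbf^{\pr\pr}=\one$ forces every $x^{\pr\pr}$-factor to become the adjoined unit and thus disappear from each projection, while the assignment $\pbf_i^\pr\approx\pbf_i$ re-indexes the coordinates so that position $b$ of $\pbf_i^\pr$ carries the value at position $b+1$ of $\pbf_i$. The combined effect, which I would establish first, is the identity $\pi_b(t_A(\Pbf^\pr,\Pbf^{\pr\pr},P,\bar{0}))=\pi_{b+1}(t_A(\Pbf,P))$ for every $b\in B_0$, and likewise for $s_A$ and for the two sides of $E_A$.

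For item~(1) I would read the substitution~(\ref{eq:new_variables}) additively, so that a term $t_B(X)=\al_1x_1+\ldots+\al_nx_n$ becomes $\sum_i\al_i(x_i^\pr+x_i^{\pr\pr})$. Evaluating at $x_i^\pr=p_i$ and $x_i^{\pr\pr}=0$ and using that $0$ is the identity of $B_0$ gives back $t_B(P)$, and similarly $s_B(P)$; since $(\Pbf,P)\in\V_C(S^\ast)$ yields $P\in\V_B(S^\ast_B)$, every equation of $S^\ast_B$ is satisfied, i.e.\ $(P,\bar{0})\in\V_{B_0}(S^\ast_B(X^\pr,X^{\pr\pr}))$.

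To prove the projection identity I would expand~(\ref{eq:pi_b(t_A(pr,prpr,P))}) at the shift parameters $P^\pr=P$, $P^{\pr\pr}=\bar{0}$. Because each $p_i^{\pr\pr}=0$, the lower index of the $m$-th $x^\pr$-factor reduces to the running sum $b+p_{i_1}+\ldots+p_{i_{m-1}}$, exactly the index occurring in~(\ref{eq:pi_b(t_A(X,P))}). Evaluating at $(\Pbf^\pr,\Pbf^{\pr\pr})$ then sends every $x^{\pr\pr}$-factor to $1$ and every $x^\pr$-factor to the coordinate of the corresponding $\pbf$ sitting one position higher; the unit factors drop out and the surviving product is precisely $\pi_{b+1}(t_A(\Pbf,P))$. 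The same computation applies verbatim to $s_A$ and to $E_A$.

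Items~(2) and~(3) are then immediate. Since $(\Pbf,P)\in\V_C(S^\ast)$ gives $\Pbf\in\V_\Pi(S^\ast_A(X,P))$, we have $\pi_c(t_A(\Pbf,P))=\pi_c(s_A(\Pbf,P))$ for every $c\ge1$; applying the identity with $c=b+1$ for $b\ge0$ shows all projections of the doubled equations agree, which is item~(2). For item~(3), the standing assumption $\neg\pi_1(E_A(\Pbf,P))$ is exactly the failure of the $(b+1)$-st projection at $b=0$, so $\pi_0(t_A(\Pbf^\pr,\Pbf^{\pr\pr},P,\bar{0}))\ne\pi_0(s_A(\Pbf^\pr,\Pbf^{\pr\pr},P,\bar{0}))$, i.e.\ $\neg\pi_0(E_A(\Pbf^\pr,\Pbf^{\pr\pr},P,\bar{0}))$. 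I expect the only delicate point to be the index bookkeeping in the projection identity: one must keep $\Pbf^{\pr\pr}$ equal to the unit $\one$ rather than a zero---a zero would annihilate the entire product, whereas a unit merely deletes the $x^{\pr\pr}$-factors---and one must orient the $\approx$-shift so that the exceptional projection $\be=1$ over $\Pi$ is carried onto the projection $\be=0$ over $\Pi_{01}$.
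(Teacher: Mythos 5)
Your proposal is correct and follows essentially the same route as the paper: both arguments rest on the observation that evaluating the doubled terms at $\Pbf^{\pr\pr}=\one$, $P^{\pr\pr}=\bar{0}$ makes the $x^{\pr\pr}$-factors disappear and yields $t_A(\Pbf^\pr,\Pbf^{\pr\pr},P,\bar{0})\approx t_A(\Pbf,P)$, which is exactly your projection identity $\pi_b(t_A(\Pbf^\pr,\Pbf^{\pr\pr},P,\bar{0}))=\pi_{b+1}(t_A(\Pbf,P))$ stated coordinate-wise, and item (1) is handled identically by additivity. Your closing remarks (unit versus zero for $\Pbf^{\pr\pr}$, and the shift carrying $\be=1$ to $\be=0$) correctly flag the only delicate points and match the paper's bookkeeping.
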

\begin{proof}
\begin{enumerate}
\item In variables $X^\pr,X^{\pr\pr}$ any $\At_+$-equation $s_1(X)=s_2(X)\in S^\ast_B(X)$
\[
s_1(X)=\al_1x_{i_1}+\ldots+\al_kx_{i_k},\; s_2(X)=\be_1x_{j_1}+\ldots+\be_lx_{j_l}
\]
becomes
\[
\al_1x_{i_1}^\pr+\ldots+\al_kx_{i_k}^\pr+\al_1x_{i_1}^{\pr\pr}+\ldots+\al_kx_{i_k}^{\pr\pr}=
\be_1x_{j_1}^\pr+\ldots+\be_lx_{j_l}^\pr+\be_1x_{j_1}^{\pr\pr}+\ldots+\be_lx_{j_l}^{\pr\pr}\in S_B(X^\pr,X^{\pr\pr}).
\]
The point $(P,\bar{0})$ obviously satisfies this equation, since $P\in\V_B(s_1(X)=s_2(X))$.

\item Let us take an equation $t_A(X^\pr,X^{\pr\pr})=s_A(X^\pr,X^{\pr\pr})\in S^\ast_A(X^\pr,X^{\pr\pr})$ defined by~(\ref{eq:t_A(pr,prpr)},\ref{eq:s_A(pr,prpr)}). We have

\begin{multline}
\label{eq:t_A(Pbf,one,P,zero)}
t_A(\Pbf^\pr,\Pbf^{\pr\pr},P,\bar{0})=\pbf_{i_1}\s_{p_{i_1}}(\one)
\s_{p_{i_1}}(\pbf_{i_2})
\s_{p_{i_1}+p_{i_2}}(\one)
\ldots 
\s_{p_{i_1}+\ldots+ p_{i_{k-1}}}(\pbf_{i_k})
\s_{p_{i_1}+\ldots+p_{i_k}}(\one)=\\
\pbf_{i_1}
\s_{p_{i_1}}(\pbf_{i_2})
\ldots 
\s_{p_{i_1}+\ldots+ p_{i_{k-1}}}(\pbf_{i_k})\approx t_A(\Pbf,P).
\end{multline}
\begin{multline}
\label{eq:s_A(Pbf,one,P,zero)}
s_A(\Pbf^\pr,\Pbf^{\pr\pr},P,\bar{0})=\pbf_{j_1}\s_{p_{j_1}}(\one)
\s_{p_{j_1}}(\pbf_{j_2})
\s_{p_{j_1}+p_{j_2}}(\one)
\ldots 
\s_{p_{j_1}+\ldots+ p_{j_{l-1}}}(\pbf_{j_l})
\s_{p_{j_1}+\ldots+p_{j_l}}(\one)=\\
\pbf_{j_1}
\s_{p_{j_1}}(\pbf_{j_2})
\ldots 
\s_{p_{j_1}+\ldots+ p_{j_{l-1}}}(\pbf_{j_l})\approx s_A(\Pbf,P).
\end{multline}
Since $(\Pbf,P)$ is a solution of $S^\ast(X)$, we have $t_A(\Pbf,P)=s_A(\Pbf,P)$ and therefore $t_A(\Pbf^\pr,\Pbf^{\pr\pr},P,\bar{0})=s_A(\Pbf^\pr,\Pbf^{\pr\pr},P,\bar{0})$.

\item Let $E(X)\colon t(X)=s(X)$ and assume that the wreath terms $t_A(X^\pr,X^{\pr\pr}),s_A(X^\pr,X^{\pr\pr})$ are defined by~(\ref{eq:t_A(pr,prpr)},\ref{eq:s_A(pr,prpr)}). By~(\ref{eq:t_A(Pbf,one,P,zero)},\ref{eq:s_A(Pbf,one,P,zero)}), we obtain 
\begin{eqnarray*}
t_A(\Pbf^\pr,\Pbf^{\pr\pr},P,\bar{0})\approx t_A(\Pbf,P),\\
s_A(\Pbf^\pr,\Pbf^{\pr\pr},P,\bar{0})\approx s_A(\Pbf,P),
\end{eqnarray*}
and
\begin{eqnarray*}
\pi_0(t_A(\Pbf^\pr,\Pbf^{\pr\pr},P,\bar{0}))=\pi_1(t_A(\Pbf,P)),\\
\pi_0(s_A(\Pbf^\pr,\Pbf^{\pr\pr},P,\bar{0}))=\pi_1(s_A(\Pbf,P)).
\end{eqnarray*}
By Lemma~\ref{l:to_first_projection},  $\pi_1(t_A(\Pbf,P))\neq \pi_1(s_A(\Pbf,P))$. Thus, $\pi_0(t_A(\Pbf^\pr,\Pbf^{\pr\pr},P,\bar{0}))\neq \pi_0(s_A(\Pbf^\pr,\Pbf^{\pr\pr},P,\bar{0}))$. 
\end{enumerate}
\end{proof}



Let us define the following set of $\Tcal_+$-terms. 
\[
T_\ast=\{\downarrow t_B\mid t_B(X)=s_B(X)\in S^\ast_B(X)\}.
\]
Since $\downarrow t_B=\downarrow s_B$ for any equation $t_B(X)=s_B(X)\in S^\ast$, the set $T_\ast$ contains all sub-terms of the terms from the system $S^\ast_B(X)$. By the finiteness of $S^\ast(X)$ and Lemma~\ref{l:properties_<}, the set $T_\ast$ is finite.

 Put
\begin{equation}
\label{eq:T}
T=\{t^\pr+s^\pr\mid t\in T_<, s\in T_\ast\}\subseteq \Tcal_+,
\end{equation}
where the set $T_<$ was defined by~(\ref{eq:T_<}). Since the sets $T_<,T_\ast$ are finite, so is $T$.

The following lemma immediately follows from Theorem~\ref{th:N_discrimination_new}.

\begin{lemma}
For the set of terms $T$~(\ref{eq:T}) and the system $S^\ast(X)$~(\ref{eq:S^ast}) there exists a point $Q\in\V_B(S^\ast_B)$ such that 
\begin{equation}
t_i(Q)\neq t_j(Q) \mbox{ for any $t_i,t_j\in T$ if $t_i\nsim t_j$}.
\label{eq:Q}
\end{equation}
\label{l:point_for_discrimination}
\end{lemma}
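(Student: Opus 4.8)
The plan is to obtain the lemma as a direct application of Theorem~\ref{th:N_discrimination_new}. First I would record that the set $T$ defined in~(\ref{eq:T}) is finite, as noted immediately before the statement, so that I may list $T=\{t_1,\ldots,t_k\}$ as a finite family of $\Tcal_+$-terms. This is exactly the kind of input the discrimination theorem expects.

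The one point deserving attention is that the equivalence $\sim$ occurring in~(\ref{eq:Q}) is the relation induced by $Y_B$, whereas Theorem~\ref{th:N_discrimination_new} discriminates terms with respect to the equivalence induced by the solution set of the system to which it is applied. I would therefore first verify that $\V_B(S^\ast_B)=Y_B$. Indeed, since $\hat{S}\subseteq S^\ast$ we have $\hat{S}_B\subseteq S^\ast_B$ and hence $\V_B(S^\ast_B)\subseteq\V_B(\hat{S}_B)=Y_B$ by~(\ref{eq:S_hat}); conversely, since $S^\ast\subseteq S$ we have $S^\ast_B\subseteq S_B$ and hence $Y_B=\V_B(S_B)\subseteq\V_B(S^\ast_B)$. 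The two inclusions give $\V_B(S^\ast_B)=Y_B$, so the relation $\sim$ defined through $Y_B$ coincides with the one defined through $\V_B(S^\ast_B)$.

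With this identification in hand I would apply Theorem~\ref{th:N_discrimination_new} to the system of additive equations $S^\ast_B$ and to the finite family $t_1,\ldots,t_k$. The theorem produces a point $Q\in\V_B(S^\ast_B)$ with $t_i(Q)\neq t_j(Q)$ whenever $t_i\nsim t_j$, which is precisely the assertion~(\ref{eq:Q}).

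I do not expect any substantive obstacle: the lemma is genuinely an immediate corollary, and the only step one must be careful not to skip is checking that the ambient equivalence $\sim$ agrees with the equivalence furnished by the discrimination theorem, i.e. the equality $\V_B(S^\ast_B)=Y_B$. Once that bookkeeping is settled, the conclusion is simply the statement of Theorem~\ref{th:N_discrimination_new} applied to the finite set $T$.
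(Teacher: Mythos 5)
Your proof is correct and takes essentially the same approach as the paper, which simply states that the lemma follows immediately from Theorem~\ref{th:N_discrimination_new}. Your explicit verification that $\V_B(S^\ast_B)=Y_B$ (via the inclusions $\hat{S}\subseteq S^\ast\subseteq S$ and~(\ref{eq:S_hat})) is exactly the bookkeeping the paper leaves implicit, and it is sound.
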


Let us define new points $\Qbf^\pr,\Qbf^{\pr\pr}$ by 
\begin{eqnarray}
\label{eq:Qbf'}
\Qbf^\pr=(\qbf_1^\pr,\ldots,\qbf_n^\pr),\; \qbf_i^\pr=(q^\pr_{i,0},q^\pr_{i,1},\ldots)\in\Pi_{01},\\
\label{eq:Qbf''}
\Qbf^{\pr\pr}=(\qbf_1^{\pr\pr},\ldots,\qbf_n^{\pr\pr}),\; \qbf_i^{\pr\pr}=(q^{\pr\pr}_{i,0},q^{\pr\pr}_{i,1},\ldots)\in\Pi_{01}
\end{eqnarray}
The coordinates of the points $\Qbf^\pr,\Qbf^{\pr\pr}$ are defined by the points $\Pbf^\pr,\Pbf^{\pr\pr}$ as follows:
\begin{eqnarray*}
\label{eq:q'}
q^\pr_{i,b}=
\begin{cases}p^\pr_{i,c} \mbox{ if }\exists s\in T_<\mbox{ such that } b=s(Q), c=s(P),\\
0,\mbox{ otherwise}
\end{cases}\\
q^{\pr\pr}_{i,b}=\begin{cases}p^{\pr\pr}_{i,c} \mbox{ if }\exists s\in T_<\mbox{ such that } b=s(Q), c=s(P),\\
0,\mbox{ otherwise}
\end{cases}
\label{eq:q''}
\end{eqnarray*}
In particular, $q^\pr_{i,0}=p^\pr_{i,0}$ and $q^{\pr\pr}_{i,0}=p^{\pr\pr}_{i,0}$, since $0\in T_<$.

Let us show that the definition of the points $\Qbf^\pr,\Qbf^{\pr\pr}$ is well-defined. If assume the existence of two terms $s_1,s_2\in T_<$ with $b=s_1(Q)=s_2(Q)$, then the choice of $Q$ gives $s_1\sim s_2$ and therefore $s_1(P)=s_2(P)$.

\begin{lemma}
We have $(\Qbf^\pr,\Qbf^{\pr\pr})\in \V_{\Pi_{01}}(S^\ast_A(X^\pr,X^{\pr\pr},Q,\bar{0}))$, $(Q,\bar{0})\in\V_{B_0}(S^\ast_B(X^\pr,X^{\pr\pr}))$.
\label{l:big_lemma}
\end{lemma}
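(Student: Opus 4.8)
The plan is to prove the two memberships separately, since the $B$-part claim follows almost immediately from the construction while the $A$-part claim is where the work lies. For the additive claim $(Q,\bar 0)\in\V_{B_0}(S^\ast_B(X^\pr,X^{\pr\pr}))$, I would invoke Lemma~\ref{l:double_variables}(1): under the doubling substitution~(\ref{eq:new_variables}) each equation of $S^\ast_B$ becomes the same equation with $x_i$ replaced by $x_i^\pr+x_i^{\pr\pr}$, so the point $(Q,\bar 0)$ satisfies it precisely because $Q\in\V_B(S^\ast_B)$, which is guaranteed by the choice of $Q$ in Lemma~\ref{l:point_for_discrimination}. The substitution sends $x_i^\pr+x_i^{\pr\pr}\mapsto q^\pr_{i}+0=Q_i$, reducing each equation to the original one over $B$.

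First I would fix an arbitrary wreath equation $t_A(X^\pr,X^{\pr\pr})=s_A(X^\pr,X^{\pr\pr})\in S^\ast_A(X^\pr,X^{\pr\pr})$ and, instead of comparing the full vectors, compare the $b$-th projections~(\ref{eq:pi_b(t_A(pr,prpr,P))},\ref{eq:pi_b(s_A(pr,prpr,P))}) evaluated at $(\Qbf^\pr,\Qbf^{\pr\pr})$ with shifts determined by $(Q,\bar 0)$, since a point satisfies a shift equation iff it satisfies every projection. The central idea is that the coordinates of $\Qbf^\pr,\Qbf^{\pr\pr}$ are ``sparse copies'' of $\Pbf^\pr,\Pbf^{\pr\pr}$: a coordinate $q^\pr_{i,b}$ is nonzero only when $b=s(Q)$ for some $s\in T_<$, in which case it equals $p^\pr_{i,s(P)}$. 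Because all indices appearing in the projection of a wreath term from $S^\ast_A$ are evaluations of terms lying in $T=\{t^\pr+s^\pr\mid t\in T_<,s\in T_\ast\}$ (the partial sums of the shift exponents are exactly sub-terms of the additive parts, hence in $T_\ast$, possibly augmented by a term of $T_<$), the value $q_{i,b}$ picked up at index $b=\tau(Q)$ equals the value $p_{i,\tau(P)}$ picked up by the corresponding projection at the $B$-shift $\tau(P)$. Thus the $b$-projection of the $(\Qbf^\pr,\Qbf^{\pr\pr})$-evaluation at shift $(Q,\bar 0)$ is term-by-term equal to the $\tau(P)$-projection of the $(\Pbf^\pr,\Pbf^{\pr\pr})$-evaluation at shift $(P,\bar 0)$, which vanishes or agrees on both sides by Lemma~\ref{l:double_variables}(2).

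The main obstacle I anticipate is bookkeeping the index translation precisely, that is, verifying that every monomial index occurring in~(\ref{eq:pi_b(t_A(pr,prpr,P))}) for the shifts $(Q,\bar 0)$ has the form $\tau(Q)$ with $\tau\in T$, and that the matching index for $(P,\bar 0)$ is exactly $\tau(P)$. This requires that the partial sums $p^\pr_{i_1}+\dots+p^\pr_{i_m}$ (now over $B_0$-shifts arising from $Q$, with the $X^{\pr\pr}$-parts contributing $0$ since the shifts are $\bar 0$) are genuinely values of terms in $T_\ast$, so that the well-definedness clause following Lemma~\ref{l:point_for_discrimination} applies and resolves any index collisions consistently. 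Here I would use the discrimination property~(\ref{eq:Q}) of $Q$: if two such index-terms $\tau_1,\tau_2\in T$ satisfy $\tau_1(Q)=\tau_2(Q)$ then $\tau_1\sim\tau_2$, so $\tau_1(P)=\tau_2(P)$, guaranteeing that reading off the coordinate at index $\tau(Q)$ produces a single well-defined value matching the $P$-side. Once the index dictionary $\tau(Q)\leftrightarrow\tau(P)$ is established, each projection equality transfers directly from the already-proven equality in Lemma~\ref{l:double_variables}(2), and I would conclude that $(\Qbf^\pr,\Qbf^{\pr\pr})\in\V_{\Pi_{01}}(S^\ast_A(X^\pr,X^{\pr\pr},Q,\bar 0))$.
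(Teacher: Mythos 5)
Your overall strategy for the $A$-part (compare projections, use the ``sparse copy'' structure of $\Qbf^\pr,\Qbf^{\pr\pr}$, and invoke the discriminating point $Q$) is the same as the paper's, and your treatment of the additive part coincides with the paper's one-line argument. But there is a genuine gap at the central step. You claim that every index occurring in a projection of a term of $S^\ast_A(X^\pr,X^{\pr\pr},Q,\bar{0})$ has the form $\tau(Q)$ with $\tau\in T$, and that the coordinate of $\Qbf^\pr,\Qbf^{\pr\pr}$ at such an index automatically equals the coordinate of $\Pbf^\pr,\Pbf^{\pr\pr}$ at $\tau(P)$. Neither half is true as stated. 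First, the base index $b$ of a projection ranges over all of $B_0$, so it is not a priori the value of any term. Second, and more seriously, the coordinates $q^\pr_{i,\beta},q^{\pr\pr}_{i,\beta}$ are copies of $P$-coordinates only when $\beta=s(Q)$ for some $s\in T_<$, and are $0$ otherwise; a term $\tau\in T$ need not be equivalent to any member of $T_<$, in which case the $Q$-side coordinate is $0$ while $p^\pr_{i,\tau(P)}$ may be nonzero. The discrimination property~(\ref{eq:Q}) that you invoke only resolves collisions between terms that do hit an index; it cannot turn a coordinate that the definition sets to $0$ into a copy. So your ``term-by-term'' identification of the $b$-projection at $Q$ with a projection at $P$ can fail, with one side zero and the other not, and the transfer from Lemma~\ref{l:double_variables}(2) breaks down.

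The paper closes exactly this hole with two ingredients absent from your plan. (i) A case split: if both projections at $Q$ vanish there is nothing to prove; otherwise one may assume $\pi_b(t_A(\Qbf^\pr,\Qbf^{\pr\pr},Q,\bar{0}))\neq 0$, and then every factor is nonzero, which by sparseness forces $b=t_0(Q)$ and each subsequent index to equal $t_m(Q)$ with $t_m\in T_<$; only then does discrimination give $t_0+x_{i_1}+\cdots+x_{i_m}\sim t_m$ and the dictionary for the left-hand term. (ii) For the right-hand term $s_A$ the anchoring is not automatic: the paper uses that $Q$ satisfies the additive equation $t_B(X)=s_B(X)$ of the same system equation, so $q_{i_1}+\cdots+q_{i_k}=q_{j_1}+\cdots+q_{j_l}$ and the last right-hand index equals $t_k(Q)$; then the order properties of Lemma~\ref{l:properties_<} (downward closure of $T_<$ under $<$ and under $\sim$) show that all intermediate right-hand indices are also values of $T_<$-terms, so the dictionary applies on that side as well, and the equality transfers from $(\Pbf^\pr,\Pbf^{\pr\pr})$. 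Ingredient (ii) is the heart of the lemma --- it is the very reason the variables were doubled (see the Remark preceding Lemma~\ref{l:double_variables}) --- and without it, together with the case split in (i), your argument does not go through.
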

\begin{proof}
Since the point $Q$ is a solution of the system $S^\ast_B(X)$, the point $(Q,\bar{0})$ satisfies the system $S^\ast_B(X^\pr,X^{\pr\pr})$ (Lemma~\ref{l:double_variables}). Let us prove that the point $(\Qbf^\pr,\Qbf^{\pr\pr})$ satisfies $S^\ast_A(X^\pr,X^{\pr\pr},Q,\bar{0})$.

Let us take an arbitrary wreath equation $t(X^\pr,X^{\pr\pr})=s(X^\pr,X^{\pr\pr})\in S^\ast(X^\pr,X^{\pr\pr})$, where the wreath terms $t_A(X^\pr,X^{\pr\pr})$, $s_A(X^\pr,X^{\pr\pr})$ are defined by~(\ref{eq:t_A(pr,prpr)},\ref{eq:s_A(pr,prpr)}). This equation becomes the shift equation  $t_A(X^\pr,X^{\pr\pr},Q,\bar{0})=s_A(X^\pr,X^{\pr\pr},Q,\bar{0})$, where 
\begin{eqnarray}
\label{eq:t_A(pr,prpr,Q)}
t_A(X^\pr,X^{\pr\pr},Q,\bar{0})=
x^\pr_{i_1}\s_{q_{i_1}}(x^{\pr\pr}_{i_1})
\s_{q_{i_1}}(x^\pr_{i_2})
\s_{q_{i_1}+q_{i_2}}(x^{\pr\pr}_{i_2})
\ldots 
\s_{q_{i_1}+\ldots+ q_{i_{k-1}}}(x^\pr_{i_k})
\s_{q_{i_1}+\ldots+q_{i_k}}(x^{\pr\pr}_{i_k}),\\
s_A(X^\pr,X^{\pr\pr},Q,\bar{0})=
x^\pr_{j_1}\s_{q_{j_1}}(x^{\pr\pr}_{j_1})
\s_{q_{j_1}}(x^\pr_{j_2})
\s_{q_{j_1}+q_{i_2}}(x^{\pr\pr}_{j_2})
\ldots 
\s_{q_{j_1}+\ldots+ q_{j_{l-1}}}(x^\pr_{j_l})
\s_{q_{j_1}+\ldots+q_{j_l}}(x^{\pr\pr}_{j_l}),
\label{eq:s_A(pr,prpr,Q)}
\end{eqnarray}
and
\begin{eqnarray*}
\label{eq:pi_t_A(pr,prpr,Q)}
\pi_b(t_A(\Qbf^\pr,\Qbf^{\pr\pr},Q,\bar{0}))=
q^\pr_{i_1,b}
q^{\pr\pr}_{i_1,b+q_{i_1}}
q^\pr_{i_2,b+q_{i_1}}
q^{\pr\pr}_{i_2,b+q_{i_1}+q_{i_2}}
\ldots 
q^\pr_{i_k,b+q_{i_1}+\ldots+ q_{i_{k-1}}}
q^{\pr\pr}_{i_k,b+q_{i_1}+\ldots+q_{i_k}},\\
\pi_b(s_A(\Qbf^\pr,\Qbf^{\pr\pr},Q,\bar{0}))=
q^\pr_{j_1,b}
q^{\pr\pr}_{j_1,b+q_{j_1}}
q^\pr_{j_2,b+q_{j_1}}
q^{\pr\pr}_{j_2,b+q_{j_1}+q_{i_2}}
\ldots 
q^\pr_{j_l,b+q_{j_1}+\ldots+ q_{j_{l-1}}}
q^{\pr\pr}_{j_l,b+q_{j_1}+\ldots+q_{j_l}}.
\label{eq:pi_s_A(pr,prpr,Q)}
\end{eqnarray*}

If both expressions $\pi_b(t_A(\Qbf^\pr,\Qbf^{\pr\pr},Q,\bar{0}))$, $\pi_b(s_A(\Qbf^\pr,\Qbf^{\pr\pr},Q,\bar{0}))$ equal zero, the equality $\pi_b(t_A(\Qbf^\pr,\Qbf^{\pr\pr},Q,\bar{0}))=\pi_b(s_A(\Qbf^\pr,\Qbf^{\pr\pr},Q,\bar{0}))$ holds. 

Otherwise, we assume $\pi_b(t_A(\Qbf^\pr,\Qbf^{\pr\pr},Q,\bar{0}))\neq 0$, and it follows 
\begin{eqnarray*}
q^\pr_{i_1,b}\neq 0,\\
q^{\pr\pr}_{i_1,b+q_{i_1}}\neq 0,\\
q^\pr_{i_2,b+q_{i_1}}\neq 0,\\
q^{\pr\pr}_{i_2,b+q_{i_1}+q_{i_2}}\neq 0,\\
\ldots \\
q^\pr_{i_k,b+q_{i_1}+\ldots+ q_{i_{k-1}}}\neq 0,\\
q^{\pr\pr}_{i_k,b+q_{i_1}+\ldots+q_{i_k}}\neq 0.\\
\end{eqnarray*}  

By the definition of the points $\Qbf^\pr,\Qbf^{\pr\pr}$, there exist $\At_+$-terms $t_0,t_1,\ldots,t_k\in T_<$ such that
\begin{eqnarray*}
b=t_0(Q),\\
b+q_{i_1}=t_1(Q),\\
b+q_{i_1}+q_{i_2}=t_2(Q),\\
\ldots\\
b+q_{i_1}+\ldots+q_{i_k}=t_k(Q).
\end{eqnarray*}
Also we have the following term equalities at the point $Q$:
\begin{eqnarray*}
\left(t_0+x_{i_1}\right)(Q)=t_1(Q),\\
\left(t_0+x_{i_1}+x_{i_2}\right)(Q)=t_2(Q),\\
\ldots\\
\left(t_0+x_{i_1}+\ldots+x_{i_k}\right)(Q)=t_k(Q).
\end{eqnarray*}
Since the equation $t(X^\pr,X^{\pr\pr})=s(X^\pr,X^{\pr\pr})$ defines the additive equation $t_B(X)=s_B(X)\in S_B(X)$~(\ref{eq:t_B(X)},\ref{eq:s_B(X)}), then
\[
x_{i_1},\; x_{i_1}+x_{i_2},\; \ldots,\; x_{i_1}+\ldots+x_{i_k}\; \in T_\ast,
\]
and moreover
\[
t_0+x_{i_1},\; t_0+x_{i_1}+x_{i_2},\;\ldots\; t_0+x_{i_1}+\ldots+x_{i_k}\; \in T.
\]
The choice of the point $Q$ gives 
\begin{eqnarray*}
t_0(X)+x_{i_1}\sim t_1(X),\\
t_0(X)+x_{i_1}+x_{i_2}\sim t_2(X),\\
\ldots\\
t_0(X)+x_{i_1}+\ldots+x_{i_k}\sim t_k(X).
\end{eqnarray*}
Thus, we have
\begin{eqnarray*}
q^\pr_{i_1,b}=q^\pr_{i_1,t_0(Q)}=p^\pr_{i_1,t_0(P)},\\
q^{\pr\pr}_{i_1,b+q_{i_1}}=q^{\pr\pr}_{i_1,t_1(Q)}=p^{\pr\pr}_{i_1,t_1(P)}=p^{\pr\pr}_{i_1,t_0(P)+p_{i_1}},\\
q^\pr_{i_2,b+q_{i_1}}=q^{\pr}_{i_2,t_1(Q)}=p^{\pr}_{i_2,t_1(P)}=p^{\pr}_{i_2,t_0(P)+p_{i_1}},\\
q^{\pr\pr}_{i_2,b+q_{i_1}+q_{i_2}}=q^{\pr\pr}_{i_1,t_2(Q)}=p^{\pr\pr}_{i_1,t_2(P)}=p^{\pr\pr}_{i_1,t_0(P)+p_{i_1}+p_{i_2}},\\
\ldots\\
q^\pr_{i_l,b+q_{i_1}+\ldots+ q_{i_{k-1}}}=q^\pr_{i_l,t_{k-1}(Q)}=p^\pr_{i_l,t_{k-1}(P)}=p^{\pr}_{i_2,t_0(P)+p_{i_1}+\ldots p_{i_{k-1}}},\\
q^{\pr\pr}_{i_l,b+q_{i_1}+\ldots+ q_{i_{k-1}}+q_{i_k}}=q^{\pr\pr}_{i_l,t_k(Q)}=p^{\pr\pr}_{i_l,t_k(P)}=p^{\pr\pr}_{i_1,t_0(P)+p_{i_1}+\ldots+p_{i_k}}.
\end{eqnarray*}

By Lemma~\ref{l:t_B_sim_s_B}, the point $Q\in\V_B(S^\ast_B)$ satisfies the additive equation $t_B(X)=s_B(X)$, hence we have the equality  $q_{i_1}+\ldots+q_{i_k}=q_{j_1}+\ldots+q_{j_l}$. Hence,  $t_k(Q)=b+q_{j_1}+\ldots+q_{j_l}$, and 
\[
\left(t_0+x_{j_1}+\ldots+ x_{j_l}\right)(Q)=t_k(Q).
\]
Since $t_0\in T_<$ and $x_{j_1}+\ldots+ x_{j_l}\in T_{\ast}$, the $\At_+$-term $t_0(X)+x_{j_1}+\ldots+x_{j_l}$ belongs to $T$. By the choice of the point $Q$, it follows that 
\[
t_0(X)+x_{j_1}+\ldots+x_{j_l}\sim t_k(X). 
\] 
Since $t_k\in T_<$, the following terms also belong to the set $T_<$:
\begin{eqnarray*}
s_l(X)\sim t_0(X)+x_{j_1}+\ldots+x_{j_l},\\
s_{l-1}(X)\sim t_1(X)+x_{j_1}+\ldots+x_{j_{l-1}},\\
\ldots\\
s_1(X)\sim t_0(X)+x_{j_1},\\
s_0(X)\sim t_0(X).
\end{eqnarray*}
We have
\begin{eqnarray*}
q^\pr_{j_1,b}=q^\pr_{j_1,s_0(Q)}=p^\pr_{j_1,s_0(P)},\\
q^{\pr\pr}_{j_1,b+q_{j_1}}=q^{\pr\pr}_{j_1,s_1(Q)}=p^{\pr\pr}_{j_1,s_1(P)}=p^{\pr\pr}_{j_1,s_0(P)+p_{j_1}},\\
q^\pr_{j_2,b+q_{j_1}}=q^{\pr}_{j_2,s_1(Q)}=p^{\pr}_{j_2,s_1(P)}=p^\pr_{j_1,s_0(P)+p_{j_1}},\\
q^{\pr\pr}_{j_2,b+q_{j_1}+q_{i_2}}=q^{\pr\pr}_{j_1,s_2(Q)}=p^{\pr\pr}_{j_1,s_2(P)}=p^{\pr\pr}_{j_1,s_0(P)+p_{j_1}+p_{j_2}},\\
\ldots\\
q^\pr_{j_l,b+q_{j_1}+\ldots+ q_{j_{l-1}}}=q^\pr_{j_l,s_{l-1}(Q)}=p^\pr_{j_l,s_{l-1}(P)}=p^\pr_{j_1,s_0(P)+p_{j_1}+\ldots+p_{l-1}},\\
q^{\pr\pr}_{j_l,b+q_{j_1}+\ldots+ q_{j_{l-1}}+q_{j_l}}=q^{\pr\pr}_{j_l,s_l(Q)}=p^{\pr\pr}_{j_l,s_l(P)}=p^{\pr\pr}_{j_1,s_0(P)+p_{j_1}+\ldots+p_{j_l}}.
\end{eqnarray*}

Finally, we obtain (below $a=t_0(P)=s_0(P)$)
\begin{eqnarray*}
\pi_b(t_A(\Qbf^\pr,\Qbf^{\pr\pr},Q,\bar{0}))=
p^\pr_{i_1,a}
p^{\pr\pr}_{i_1,a+p_{i_1}}
p^{\pr}_{i_2,a+p_{i_1}}
p^{\pr\pr}_{i_1,a+p_{i_1}+p_{i_2}}
\ldots
p^{\pr}_{i_2,a+p_{i_1}+\ldots p_{i_{k-1}}}
p^{\pr\pr}_{i_1,a+p_{i_1}+\ldots+p_{i_k}}\\
=\pi_a(t_A(\Pbf^\pr,\Pbf^{\pr\pr},P,\bar{0})),\\
\pi_b(s_A(\Qbf^\pr,\Qbf^{\pr\pr},Q,\bar{0}))=
p^\pr_{j_1,a}
p^{\pr\pr}_{j_1,a+p_{j_1}}
p^\pr_{j_1,a+p_{j_1}}
p^{\pr\pr}_{j_1,a+p_{j_1}+p_{j_2}}
p^\pr_{j_1,a+p_{j_1}+\ldots+p_{l-1}}
p^{\pr\pr}_{j_1,a+p_{j_1}+\ldots+p_{j_l}}\\
=\pi_a(s_A(\Pbf^\pr,\Pbf^{\pr\pr},P,\bar{0})).
\end{eqnarray*}

Since $(\Pbf^\pr,\Pbf^{\pr\pr})\in\V_{\Pi_{01}}(S^\ast(X^\pr,X^{\pr\pr},P,\bar{0}))$, it follows $\pi_b(t_A(\Qbf^\pr,\Qbf^{\pr\pr},Q,\bar{0}))=\pi_b(s_A(\Qbf^\pr,\Qbf^{\pr\pr},Q,\bar{0}))$. Thus, $(\Qbf^\pr,\Qbf^{\pr\pr})\in\V_{\Pi_{01}}(S^\ast(X^\pr,X^{\pr\pr},Q,\bar{0}))$.
\end{proof}

\begin{lemma}
For the point $(\Qbf^\pr,\Qbf^{\pr\pr})$~(\ref{eq:Qbf'},\ref{eq:Qbf''}) and the equation $E(X)\colon t(X)=s(X)$ we have  $\neg \pi_0(E_A(\Qbf^\pr,\Qbf^{\pr\pr},Q,\bar{0}))$.
\end{lemma}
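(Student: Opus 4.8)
The plan is to transfer the computation to the point $(\Pbf^\pr,\Pbf^{\pr\pr},P,\bar 0)$, for which part~3 of Lemma~\ref{l:double_variables} already records $\neg\pi_0(E_A(\Pbf^\pr,\Pbf^{\pr\pr},P,\bar 0))$. Concretely, I would prove the two identities
\begin{equation*}
\pi_0\bigl(t_A(\Qbf^\pr,\Qbf^{\pr\pr},Q,\bar 0)\bigr)=\pi_0\bigl(t_A(\Pbf^\pr,\Pbf^{\pr\pr},P,\bar 0)\bigr),\quad \pi_0\bigl(s_A(\Qbf^\pr,\Qbf^{\pr\pr},Q,\bar 0)\bigr)=\pi_0\bigl(s_A(\Pbf^\pr,\Pbf^{\pr\pr},P,\bar 0)\bigr),
\end{equation*}
after which the required $\pi_0(t_A(\Qbf^\pr,\Qbf^{\pr\pr},Q,\bar 0))\neq\pi_0(s_A(\Qbf^\pr,\Qbf^{\pr\pr},Q,\bar 0))$, that is $\neg\pi_0(E_A(\Qbf^\pr,\Qbf^{\pr\pr},Q,\bar 0))$, is immediate from Lemma~\ref{l:double_variables}.

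The argument for each identity is the specialisation of the computation in Lemma~\ref{l:big_lemma} to the equation $E$ and to the base term $t_0=0\in T_<$, so that the $0$-th projection is mapped to the $0$-th projection. Expanding $\pi_0(t_A(\Qbf^\pr,\Qbf^{\pr\pr},Q,\bar 0))$ by the shift term~(\ref{eq:t_A(pr,prpr,Q)}), the primed factors sit at the positions $(x_{i_1}+\ldots+x_{i_m})(Q)$ for $0\le m\le k-1$. Each such partial sum $s=x_{i_1}+\ldots+x_{i_m}$ satisfies $s<t_B$, hence $s\in T_<$; by the definition of $\Qbf^\pr$ the corresponding factor equals $p^\pr_{i_{m+1},s(P)}$, and the well-definedness of $\Qbf^\pr,\Qbf^{\pr\pr}$ established above (the point $Q$ discriminates the terms of $T\supseteq T_<$) makes this translation unambiguous. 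Running the same computation for $s_A$, and using $t_B\sim s_B$ together with Lemma~\ref{l:properties_<} to keep all partial sums of $s_B$ inside $T_<$, reproduces exactly the primed part of $\pi_0(t_A(\Pbf^\pr,\Pbf^{\pr\pr},P,\bar 0))$ and of $\pi_0(s_A(\Pbf^\pr,\Pbf^{\pr\pr},P,\bar 0))$.

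The step I expect to be the main obstacle is the trailing factor of each product, the double-primed variable $x^{\pr\pr}_{i_k}$ (respectively $x^{\pr\pr}_{j_l}$), which sits at position $(x_{i_1}+\ldots+x_{i_k})(Q)=t_B(Q)$ (respectively $s_B(Q)=t_B(Q)$). This full sum is \emph{not} strictly below $t_B$, so it lies outside $T_<$ and is not reached by the discrimination argument used for the other factors. This is precisely the situation for which the doubled variables~(\ref{eq:new_variables}) were introduced: the offending terminal position is occupied by a double-primed variable, whose value is the adjoined unit of $A_1$ (the assignment $\pbf^{\pr\pr}_i:=\one$ propagates to $\Qbf^{\pr\pr}$), so this factor is a unit and does not disturb either product. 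Since $t_B(Q)=s_B(Q)$, the terminal factors on the two sides coincide, and the two identities follow. Combined with Lemma~\ref{l:double_variables}, this yields $\neg\pi_0(E_A(\Qbf^\pr,\Qbf^{\pr\pr},Q,\bar 0))$.
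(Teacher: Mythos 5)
Your overall route is exactly the paper's: expand both $0$-th projections factor by factor, use the definition of the coordinates of $\Qbf^\pr,\Qbf^{\pr\pr}$ to replace each factor by the corresponding coordinate of $\Pbf^\pr,\Pbf^{\pr\pr}$ (legitimate at the positions given by \emph{proper} partial sums of $t_B,s_B$, which lie in $T_<$, with well-definedness coming from the discriminating choice of $Q$), and then invoke $\neg\pi_0(E_A(\Pbf^\pr,\Pbf^{\pr\pr},P,\bar{0}))$ from Lemma~\ref{l:double_variables}. You have also put your finger on precisely the step that the paper's own write-up passes over in silence: the terminal factors $q^{\pr\pr}_{i_k,\,t_B(Q)}$ and $q^{\pr\pr}_{j_l,\,s_B(Q)}$, whose position is the value of the \emph{full} sum, which does not lie in $T_<$.

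However, your resolution of that step does not work under the stated definitions. The assignment $\pbf^{\pr\pr}_i:=\one$ does not ``propagate'' to $\Qbf^{\pr\pr}$: by construction, $q^{\pr\pr}_{i,b}$ equals $p^{\pr\pr}_{i,c}=1$ only when $b=s(Q)$ (and $c=s(P)$) for some $s\in T_<$, and equals $0$ at every other position, so $\Qbf^{\pr\pr}$ is far from being the all-ones vector. The terminal position is of the second kind, as your own observation shows: since $Q\in\V_B(S^\ast_B)=Y_B$ and the coordinates of a point of $B^n$ are positive, every $s\in T_<$ satisfies $s(Q)<t_B(Q)$ (for $s<t_B$ write $s+s^\pr\sim t_B$ with $s^\pr(Q)\geq 1$; and $0(Q)=0$). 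Hence, read literally, both terminal factors are $0$, both projections collapse to $0\in A_1$, and the two sides are \emph{equal} --- the negation of what the lemma asserts. Your back-up remark that the terminal factors on the two sides coincide (because $t_B(Q)=s_B(Q)$) therefore cuts the wrong way: coincidence at the value $0$ destroys the desired inequality rather than preserving it. The repair --- which is what the paper tacitly assumes here, and which is forced by the parallel step in Lemma~\ref{l:big_lemma}, where the full sums $x_{i_1}+\ldots+x_{i_k}$ are claimed to lie in $T_\ast$ --- is to read $T_<$~(\ref{eq:T_<}) as the \emph{non-strict} down-set, i.e.\ as containing the $\sim$-class of $t_B$ itself; this keeps $T_<$ finite by Lemma~\ref{l:[t]_is_finite} and changes nothing else. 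With that reading the terminal position is hit by $s=t_B$, the definition of $\Qbf^{\pr\pr}$ gives $q^{\pr\pr}_{i_k,t_B(Q)}=p^{\pr\pr}_{i_k,t_B(P)}=1$ and $q^{\pr\pr}_{j_l,s_B(Q)}=p^{\pr\pr}_{j_l,s_B(P)}=1$, and your factor-by-factor translation closes. So the missing ingredient is not ``propagation of $\one$'' but membership of $t_B$ in a corrected $T_<$; as written, your key step fails --- although, to be fair, the paper's own proof makes the same unacknowledged jump.
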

\begin{proof}
Let $E_A(X^\pr,X^{\pr\pr},Q,\bar{0})\colon t_A(X^\pr,X^{\pr\pr},Q,\bar{0})=s_A(X^\pr,X^{\pr\pr},Q,\bar{0})$, where the terms $t_A,s_A$ are defined by~(\ref{eq:t_A(pr,prpr,Q)},\ref{eq:s_A(pr,prpr,Q)}). We consider the $0$-th projection of $E_A(X^\pr,X^{\pr\pr},Q,\bar{0})$:
\begin{eqnarray*}
\pi_0(t_A(\Qbf^\pr,\Qbf^{\pr\pr},Q,\bar{0}))=
q^\pr_{i_1,0}
q^{\pr\pr}_{i_1,q^\pr_{i_1}}
q^\pr_{i_2,q_{i_1}}
q^{\pr\pr}_{i_2,q_{i_1}+q_{i_2}}
\ldots 
q^\pr_{i_k,q_{i_1}+\ldots+ q_{i_{k-1}}}
q^{\pr\pr}_{i_k,q_{i_1}+\ldots+q_{i_k}},\\
\pi_0(s_A(\Qbf^\pr,\Qbf^{\pr\pr},Q,\bar{0}))=
q^\pr_{j_1,0}
q^{\pr\pr}_{j_1,q_{j_1}}
q^\pr_{j_2,q_{j_1}}
q^{\pr\pr}_{j_2,q_{j_1}+q_{i_2}}
\ldots 
q^\pr_{j_l,q_{j_1}+\ldots+ q_{j_{l-1}}}
q^{\pr\pr}_{j_l,q_{j_1}+\ldots+q_{j_l}}.
\end{eqnarray*}
The original equation $E(X)\colon t(X)=s(X)$ defines the equation $t_B(X)=s_B(X)$ over $B$, where the terms $t_B(X),s_B(X)$ are defined by~(\ref{eq:t_B(X)},\ref{eq:s_B(X)}). By the definition of the set $T_<$, all terms 
\begin{eqnarray*}
0,&&\\
{x_{i_1}},&&{x_{j_1}}\\
{x_{i_1}}+{x_{i_2}},&&{x_{j_1}}+{x_{j_2}}\\
\ldots,&&\ldots \\
 {x_{i_1}+\ldots+ x_{i_{k-1}}},&&{x_{j_1}+\ldots+ x_{j_{l-1}}}\\
\end{eqnarray*}
belong to $T_<$. Thus, the definition of the points $\Qbf^\pr,\Qbf^{\pr\pr}$ gives:
\begin{eqnarray*}
\pi_0(t_A(\Qbf^\pr,\Qbf^{\pr\pr},Q,\bar{0}))=
p^\pr_{i_1,0}
p^{\pr\pr}_{i_1,p_{i_1}}
p^\pr_{i_2,p_{i_1}}
p^{\pr\pr}_{i_2,p_{i_1}+p_{i_2}}
\ldots 
p^\pr_{i_k,p_{i_1}+\ldots+ p_{i_{k-1}}}
p^{\pr\pr}_{i_k,p_{i_1}+\ldots+p_{i_k}}\\
=\pi_0(t_A(\Pbf^\pr,\Pbf^{\pr\pr},P,\bar{0})),\\
\pi_0(s_A(\Qbf^\pr,\Qbf^{\pr\pr},Q,\bar{0}))=
p^\pr_{j_1,0}
p^{\pr\pr}_{j_1,p_{j_1}}
p^\pr_{j_2,p_{j_1}}
p^{\pr\pr}_{j_2,p_{j_1}+p_{i_2}}
\ldots 
p^\pr_{j_l,p_{j_1}+\ldots+ p_{j_{l-1}}}
p^{\pr\pr}_{j_l,p{j_1}+\ldots+ p_{j_{l-1}}+p_{j_l}}\\
=\pi_0(s_A(\Pbf^\pr,\Pbf^{\pr\pr},P,\bar{0})).
\end{eqnarray*}

According to Lemma~\ref{l:to_first_projection}, the point $(\Pbf^\pr,\Pbf^{\pr\pr})$ does not satisfy the $0$-th projection of $E_A(X^\pr,X^{\pr\pr},P,\bar{0})$. In other words,
\[
\pi_0(t_A(\Qbf^\pr,\Qbf^{\pr\pr},Q,\bar{0}))=\pi_0(t_A(\Pbf^\pr,\Pbf^{\pr\pr},P,\bar{0}))\neq 
\pi_0(s_A(\Pbf^\pr,\Pbf^{\pr\pr},P,\bar{0}))=\pi_0(s_A(\Qbf^\pr,\Qbf^{\pr\pr},Q,\bar{0})).
\]

\end{proof}


Let us come back to the original variables $X$. Using the variable substitution~(\ref{eq:new_variables}), we define a point $\Qbf=(\qbf_1,\ldots,\qbf_n)$, $\qbf_i=(q_{i,1},q_{i,2},\ldots)$ by
\[
\qbf_i\approx \qbf_i^\pr\s_{q_i}(\qbf^{\pr\pr}_i)
\]
or equivalently
\begin{equation}
q_{i,b}=q^\pr_{i,(b+1)}q^{\pr\pr}_{i,q_i+(b+1)}\; (1\leq i\leq n, b\in B).
\label{eq:Q_from_Q'Q''}
\end{equation}

The points $\Qbf,Q$ defined by~(\ref{eq:Q_from_Q'Q''},\ref{eq:Q}) satisfy the following:
\begin{enumerate}
\item $\Qbf\in\Pi^n$, since each $\qbf_i$ does not actually contain the unit $1\in A_1$;
\item $Q\in\V_B(S^\ast_B(X))$ and $\Qbf\in\V_\Pi(S^\ast_B(X,Q))$, i.e. $(\Qbf,Q)\in\V_C(S^\ast(X))$;
\item $\neg \pi_1(E_A(\Qbf,Q))$.
\end{enumerate}

\begin{lemma}
\label{l:Q_satisfies_S}
For the points $\Qbf,Q$ we have $(\Qbf,Q)\in\V_C(S(X))$.
\end{lemma}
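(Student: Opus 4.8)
The plan is to show that $(\Qbf,Q)$ satisfies \emph{every} equation of the full system $S$, not merely the finite subsystem $S^\ast$. Once this lemma is in hand, the inclusion~(\ref{eq:inclusion1}) will force $(\Qbf,Q)\in\V_C(E)$, contradicting the already established $\neg\pi_1(E_A(\Qbf,Q))$; that contradiction is what ultimately proves Theorem~B. Concretely, I fix an arbitrary equation $t(X)=s(X)\in S$, write $t(X)=x_{i_1}\ldots x_{i_k}$ and $s(X)=x_{j_1}\ldots x_{j_l}$, and verify its wreath part $t_A(\Qbf,Q)=s_A(\Qbf,Q)$ and its additive part $t_B(Q)=s_B(Q)$ separately.

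The additive part is handled by the Noetherian reduction for $B$. Since $\hat{S}\subseteq S^\ast$ and $(\Qbf,Q)\in\V_C(S^\ast)$ is already established, we have $Q\in\V_B(S^\ast_B)\subseteq\V_B(\hat{S}_B)=\V_B(S_B)$ by the choice of $\hat{S}$ in~(\ref{eq:S_hat}). Hence $Q$ satisfies $t_B(Q)=s_B(Q)$ for every equation of $S$. For the wreath part I split on whether $t(X)=s(X)$ lies in $S^\ast$. If it does, there is nothing to prove, since $(\Qbf,Q)\in\V_C(S^\ast)$ already gives $t_A(\Qbf,Q)=s_A(\Qbf,Q)$.

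The heart of the argument is the wreath part of the equations in $S\setminus S^\ast$, which I intend to dispatch with Lemma~\ref{l:long_terms}. The key point is that $\Qbf$ has small, finite support: by construction $q^\pr_{i,c}\neq 0$ forces $c=s(Q)$ for some $s\in T_<$, and relation~(\ref{eq:Q_from_Q'Q''}) then forces any nonzero $q_{i,b}$ to satisfy $b\in\{\,s(Q)-1\mid s\in T_<\,\}$. Therefore the joint support $B^\pr=\bigcup_i\supp(\qbf_i)$ obeys $|B^\pr|\leq|T_<|$, uniformly in $i$. Now if $t(X)=s(X)\in S\setminus S^\ast$, the definition~(\ref{eq:S^ast}) of $S^\ast$ tells us $k=\|t_B\|>|T_<|$ and $l=\|s_B\|>|T_<|$, so $|B^\pr|<k$ and $|B^\pr|<l$. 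Applying Lemma~\ref{l:long_terms} to each of the two products then gives $t_A(\Qbf,Q)=\zero=s_A(\Qbf,Q)$, and the wreath equation holds trivially.

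Putting the pieces together through the decomposition principle that $(\Pbf,P)\in\V_C(t=s)$ whenever $\Pbf$ satisfies the wreath part and $P$ the additive part, I conclude $(\Qbf,Q)\in\V_C(S)$. I expect the only genuinely delicate step to be the uniform support bound $|B^\pr|\leq|T_<|$: it rests on the sparsity of $\Qbf^\pr$ and $\Qbf^{\pr\pr}$ (nonzero only at the finitely many indices $s(Q)$, $s\in T_<$) being inherited by $\Qbf$ via~(\ref{eq:Q_from_Q'Q''}). After that, Lemma~\ref{l:long_terms} annihilates both long products automatically, and the remaining bookkeeping is routine.
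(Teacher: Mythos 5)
Your proof is correct and takes essentially the same route as the paper's: the additive part of every equation of $S$ is handled through $Q\in\V_B(S^\ast_B)\subseteq\V_B(\hat{S}_B)=\V_B(S_B)$, and the wreath part of every equation in $S\setminus S^\ast$ is annihilated by Lemma~\ref{l:long_terms}, using the sparsity of $\Qbf$ together with the length bound $k,l>|T_<|$ coming from the definition~(\ref{eq:S^ast}) of $S^\ast$. Your explicit observation that the supports of all the $\qbf_i$ lie in a \emph{common} set of the form $\{s(Q)-1\mid s\in T_<\}$, so that the joint support has size at most $|T_<|$ (which is exactly what Lemma~\ref{l:long_terms} requires), is slightly more careful than the paper's per-coordinate count, but it is the same argument.
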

\begin{proof}
First, we observe the following: the definition of the points $\Qbf^\pr,\Qbf^{\pr\pr}$ gives that the number of nonzero coordinates in $\qbf_i^\pr,\qbf^{\pr\pr}_i$ is at most $|T_<|$. Hence, the number of nonzero coordinates in $\qbf_i$ is not more than $|T_<|$.

Let $t(X)=s(X)$ be an arbitrary equation from $S\setminus S^\ast$. Since $S^\ast$ contains $\hat{S}$~(\ref{eq:S_hat}), the point $Q$ satisfies $t_B(X)=s_B(X)$. Let us consider the following shift terms
\begin{eqnarray*}
t_A(X,Q)=x_{i_1}\s_{q_{i_1}}(x_{i_2})\ldots \s_{q_{i_1}+\ldots+ q_{i_{k-1}}}(x_{i_k}),\\
s_A(X,Q)=x_{j_1}\s_{q_{j_1}}(x_{j_2})\ldots \s_{q_{j_1}+\ldots+ q_{j_{l-1}}}(x_{j_l}).
\end{eqnarray*}
By the choice of the subsystem $S^\ast$, we have $k,l>|T_<|$. According to Lemma~\ref{l:long_terms}, both values $t_A(\Qbf,Q)$, $s_A(\Qbf,Q)$ equal $(0,0,\ldots,)\in \Pi$. Thus, $\Qbf\in \V_\Pi(t_A(X,Q)=s_A(X,Q))$ and $\Qbf\in \V_\Pi(S_A)$.
\end{proof}

Lemma~\ref{l:Q_satisfies_S} provides
\[
(\Qbf,Q)\in \V_C(S(X))\setminus \V_C(E)
\]
that contradicts the inclusion~(\ref{eq:inclusion1}). Thus, {\bf Theorem~B} is proved.

\bigskip

\end{document}